\theoremstyle{plain}
\newtheorem{thm}{Theorem}[section]
\newtheorem{lem}[thm]{Lemma}
\newtheorem{rem}[thm]{Remark}
\newtheorem{defin}[thm]{Definition}
\newcommand{\RR}{{\mathbb R}}
\newcommand{\ep}{\varepsilon}
\newcommand{\tr}{{\rm tr}}
\numberwithin{equation}{section}
\begin{document}
\newcounter{aaa}
\newcounter{bbb}
\newcounter{ccc}
\newcounter{ddd}
\newcounter{eee}
\newcounter{ggg}
\newcounter{xxx}
\newcounter{xvi}
\newcounter{x}
\setcounter{aaa}{1}
\setcounter{bbb}{2}
\setcounter{ccc}{3}
\setcounter{ddd}{4}
\setcounter{ggg}{7}
\setcounter{eee}{32}
\setcounter{xxx}{10}
\setcounter{xvi}{16}
\setcounter{x}{38}
\title
{Wong-Zakai approximation of solutions to reflecting stochastic differential
equations on domains in Euclidean spaces II}
\author{Shigeki Aida\footnote{
This research was partially supported by
Grant-in-Aid for Scientific Research (B) No.~24340023.}
\\
Mathematical Institute\\
Tohoku University,
Sendai, 980-8578, JAPAN\\
e-mail: aida@math.tohoku.ac.jp}
\date{}
\maketitle
\begin{abstract}
The strong convergence of 
Wong-Zakai approximations of the solution
to the reflecting stochastic differential
equations was studied in \cite{aida-sasaki}.
We continue the study 
and prove the strong convergence under weaker assumptions
on the domain.
\end{abstract}

\section{Introduction}

Wong-Zakai approximations of solutions of stochastic differential
equations (=SDEs) were studied by many researchers, {\it e.g.}
\cite{ikeda-watanabe, wong-zakai, gyongy-stinga}.
In the case of reflecting SDEs, Doss and Priouret~\cite{doss}
studied the Wong-Zakai approximations when the boundary is smooth.
Actually, the unique existence of strong solutions of
reflecting SDEs were proved for domains whose boundary may not be smooth
by Tanaka~\cite{tanaka}, Lions-Sznitman~\cite{lions-sznitman}
and Saisho~\cite{saisho}.
In their studies, the standard conditions,
(A), (B), (C) and admissibility condition,
on the domain for reflecting SDEs
were introduced and the unique existence of strong solutions
were proved under the conditions either
(A) and (B) hold or the domain is convex in
\cite{tanaka} and \cite{saisho}.
We explain the conditions (A), (B), (C) in the next section.
There were studies on Wong-Zakai approximations in such cases,
{\it e.g.}, \cite{pettersson, ren-xu1, ren-xu2} for convex domains
and \cite{evans-stroock} for domains satisfying 
admissibility condition as well as conditions (A),
(B), (C).
When the domain is convex,
Ren and Xu~\cite{ren-xu2} proved that Wong-Zakai approximations
converge to the true solution in probability in the setting of
stochastic variational inequality.
In \cite{aida-sasaki}, the strong convergence of
Wong-Zakai approximations
was proved under the conditions (A), (B), (C).
We note that Zhang~\cite{zhang} proved the strong
convergence of Wong-Zakai approximations
in the setting of \cite{evans-stroock} independent
of \cite{aida-sasaki}.
The aim of this paper is to prove the strong convergence 
of Wong-Zakai approximation under the conditions either
(A) and (B) hold or the domain is convex
following the proof in \cite{aida-sasaki}.
Note that our proof in the case of convex domains is different from \cite{ren-xu2}
and we give an estimate of the order of convergence.

The paper is organized as follows.
In Section 2, we recall conditions of the boundary 
and state the main theorems.
The first main theorem (Theorem~\ref{convex case}) shows the strong convergence of
Wong-Zakai approximations when the domain is
convex.
The estimate of the order of the convergence is the same as
given in \cite{aida-sasaki}.
The second main theorem (Theorem~\ref{conditions A B}) is concerned with
the convergence of Wong-Zakai approximations in
the case where the domain satisfies the conditions (A) and (B).
We prove main theorems
in Section 3 and Section 4.

\section{Preliminaries and main theorems}

Let $D$ be a connected domain in $\RR^d$.
The following conditions can be found 
in \cite{saisho}.
In \cite{aida-sasaki}, we used the conditions
(A), (B), (C) on $D$.
In this paper, we will use (B') too.
The set ${\cal N}_x$ 
of inward unit normal vectors at 
$x\in \partial D$ is defined by
\begin{align*}
 {\cal N}_x&=\cup_{r>0}{\cal N}_{x,r},\\
{\cal N}_{x,r}&=\left\{{\bm n}\in \RR^d~|~|{\bm n}|=1,
 B(x-r{\bm n},r)\cap D=\emptyset\right\},
\end{align*}
where $B(z,r)=\{y\in \RR^d~|~|y-z|<r\}$, $z\in \RR^d$, $r>0$.

\begin{defin}
\begin{enumerate}
 \item[{\rm (A)}] $(\mbox{uniform exterior sphere condition})$.
 There exists a constant
$r_0>0$ such that
\begin{align}
{\cal N}_x={\cal N}_{x,r_0}\ne \emptyset \quad \mbox{for any}~x\in
 \partial D.
\end{align}

\item[{\rm (B)}]
There exist constants $\delta>0$ and $\beta\ge 1$
satisfying:

for any $x\in\partial D$ there exists a unit vector $l_x$ such that
\begin{align}
 (l_x,{\bm n})\ge \frac{1}{\beta}
\qquad \mbox{for any}~{\bm n}\in 
\cup_{y\in B(x,\delta)\cap \partial D}{\cal N}_y.
\end{align}

\item[{\rm (B')}]
$(\mbox{uniform interior cone condition})$
There exist $\delta>0$ and $0\le\alpha <1$
such that for any $x\in \partial D$
there exists a unit vector $l_x$ such that
$$
C(y,l_x,\alpha)\cap B(x,\delta)\subset \bar{D}
\quad \mbox{for any $y\in B(x,\delta)\cap\partial D$},
$$
where
$C(y,l_x,\alpha)=\{z\in \RR^d \, |\, 
(z-y,l_x)\ge \alpha|z-y|\}$.

\item[{\rm (C)}]
There exists a $C^2_b$ function $f$ on $\RR^d$ 
and a positive constant $\gamma$ such that 
for any $x\in \partial D$, $y\in \bar{D}$, ${\bm n}\in {\cal N}_x$
it holds that
\begin{align}
 \left(y-x,{\bm n}\right)+\frac{1}{\gamma}\left((D
 f)(x),\bm{n}\right)|y-x|^2\ge 0.\label{condition C}
\end{align}
\end{enumerate}
\end{defin}

Note that if $D$ is a convex domain, the condition (A) holds for any
$r_0$ and the condition (C) holds
for $f\equiv 0$.
Also we can prove that
the condition (B') implies condition (B) with the same $\delta$
and $\beta=(1-\alpha^2)^{-1/2}$ by noting that
$\bm{n}_y\in {\cal N}_{y,r}$ is equivalent to
$$
\left(z-y,\bm{n}_y\right)+\frac{1}{2r}|y-z|^2\ge 0
\quad \mbox{for any $z\in \bar{D}$}.
$$

Further, if $D$ is a convex domain in $\RR^2$ or
a bounded convex domain in any dimensions
, then the condition (B) holds.
This is stated in \cite{tanaka}.
Before considering reflecting SDE, let us explain the Skorohod
problem on the multidimensional domain $D$ for which
${\cal N}_x\ne \emptyset$ for all $x\in \partial D$.
Let
$w=w(t)$~$(0\le t\le T)$ be a continuous path on $\RR^d$
with $w(0)\in \bar{D}$.
The pair of paths $(\xi,\phi)$ on $\RR^d$
is a solution of a Skorohod problem
associated with $w$ if the following properties
hold.
\begin{itemize}
 \item[(i)] $\xi=\xi(t)$~$(0\le t\le T)$ is a continuous path 
in $\bar{D}$ with $\xi(0)=w(0)$.
\item[(ii)] It holds that $\xi(t)=w(t)+\phi(t)$
for all $0\le t\le T$.
\item[(iii)] $\phi=\phi(t)$~$(0\le t\le T)$ is a continuous bounded variation 
path on $\RR^d$ such that $\phi(0)=0$ and
\begin{align}
\phi(t)&=\int_0^t{\mathbf n}(s)d\|\phi\|_{[0,s]}\\
\|\phi\|_{[0,t]}&=\int_0^t1_{\partial D}(\xi(s))d\|\phi\|_{[0,s]}.
\end{align}
where ${\mathbf n}(t)\in {\cal N}_{\xi(t)}$ if 
$\xi(t)\in \partial D$.
\end{itemize}
In the above,
the notation $\|\phi\|_{[s,t]}$ stands for the total variation norm
of $\phi(u)$~$(0\le s\le u\le t\le T)$.

Let us consider reflecting SDEs.
Let $\sigma\in C^2_b(\RR^d\to \RR^d\otimes \RR^n)$
and $b\in C^1_b(\RR^d\to \RR^d)$.
Let $\Omega=C([0,\infty)\to \RR^n ; \omega(0)=0)$ and
$P$ be the Wiener measure on $\Omega$.
Let $B(t,\omega)=\omega(t)$~$(\omega\in \Omega)$ be the canonical realization of
Brownian motion.
We consider the reflecting SDE on $\bar{D}$:
\begin{align}
 X(t,x,\omega)&=x+\int_0^t\sigma(X(s,x,\omega))\circ
 dB(s,\omega)+\int_0^tb(X(s,x,\omega))ds+\Phi(t,\omega),\label{reflecting sde}
\end{align}
where $\circ dB(s)$ denotes the Stratonovich integral.
We use the notation $({\rm SDE})_{\sigma,b}$ to indicate this equation.
Note that this usage is different from
that in \cite{aida-sasaki} but I think there are no confusion.
The solution $(X(t),\Phi(t))$ to this equation
is nothing but a solution of the Skorohod problem
associated with
$$
Y(t)=x+\int_0^t\sigma(X(s,x,\omega))\circ
 dB(s,\omega)+\int_0^tb(X(s,x,\omega))ds.
$$
As explained in the Introduction,
if either the condition
\lq\lq (i) $D$ is a convex domain'' or
the condition \lq\lq (ii) 
$D$ satisfies the conditions {\rm (A)} and {\rm (B)}''
holds,
then the strong
solution $X(t)$ to (\ref{reflecting sde}) exists uniquely.
These are due to Tanaka~\cite{tanaka} for (i) and
Saisho~\cite{saisho} for (ii).
See also \cite{lions-sznitman}.
Let $X^N$ be the Wong-Zakai approximation
of $X$.
That is, $X^N$ is the solution to the reflecting differential equation
driven by continuous bounded variation paths:
\begin{align}
 X^N(t,x,\omega)=x+\int_0^t\sigma(X^N(s,x,\omega))
dB^{N}(s,\omega)+\int_0^tb(X^N(s,x,\omega))ds+
\Phi^N(t,\omega),\label{reflecting ode}
\end{align}
where
\begin{align}
 B^{N}(t)&=B(t_{k-1}^N)+\frac{\Delta_N B_k}{\Delta_N}(t-t_{k-1}^N)
\qquad t^N_{k-1}\le t\le t^N_k,\\
\Delta_N B_k&=B(t_{k}^N)-B(t^N_{k-1}),\qquad \Delta_N=\frac{T}{N},\qquad
t_k^N=\frac{kT}{N}.
\end{align}
We may denote $t^N_k$ and $\Delta_N$ by $t_k$ and $\Delta$ respectively.
The solution $X^N$ uniquely exists under conditions (A) and (B) on $D$.
See, {\it e.g.}, \cite{aida-sasaki, saisho}.
Under the convexity assumption of $D$ too,
the solution $X^N$ uniquely exists by the results
in \cite{tanaka}.
In the convex case, we can check the existence in the following different way.
More generally we consider a reflecting differential equation driven by
a continuous bounded variation path
$w_t$:
\begin{align}
 x_t&=x_0+\int_0^t\sigma(x_s)dw_s+\int_0^tb(x_s)ds+\Phi(t)\quad
x_t\in \bar{D}.
\end{align}
The definition of the solution to this equation is similar to
that of the equation previously discussed.
Let $D_R=B(x_0,R)\cap D$.
Then conditions (A) and (B) hold on $D_R$
and the solution, say, $x^R_t$ to reflecting differential equation 
on $D_R$ exists.
Moreover by Lemma~2.4 in \cite{aida-sasaki},
$\|x^R\|_{[0,T]}\le 2(\sqrt{2}+1)
(\|\sigma\|_{\infty}\|w\|_{[0,T]}+\|b\|_{\infty}T)$,
where $\|w\|_{[0,T]}$ denotes the total variation of
$w(t)$~$(0\le t\le T)$ as we already explained
and $\|\sigma\|_{\infty}$ and $\|b\|_{\infty}$
denotes the sup-norm of the operator norm and the Euclidean norm of
$\sigma$ and $b$ respectively.
Thus, we have 
$\max_{0\le t\le T}|x^R(t)-x_0|\le
2(\sqrt{2}+1)
(\|\sigma\|_{\infty}\|w\|_{[0,T]}+\|b\|_{\infty}T)
$
and we can apply the result in the case
where (A) and (B) hold.
Now we are in a position to state our main theorems.

\begin{thm}\label{convex case}
Assume $D$ is convex.
Then,
for any $0<\theta<1$, we have
\begin{align}
\max_{0\le t\le T}
E\left[|X^{N}(t)-X(t)|^2\right]
\le C_{\theta}\cdot \Delta_N^{\theta/2}\label{sup outside t}\\
 E\left[\max_{0\le t\le T}|X^{N}(t)-X(t)|^{2}\right]\le
C_{T,\theta}\Delta_N^{\theta/6}.\label{sup inside t}
\end{align}
\end{thm}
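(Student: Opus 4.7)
The plan is to follow the strategy of \cite{aida-sasaki} but exploit convexity to bypass the role played by condition (C). The key observation is that for convex $D$, any $x\in \partial D$ with $\bm{n}\in{\cal N}_x$ satisfies $(y-x,\bm{n})\ge 0$ for every $y\in\bar{D}$. Applying this along the trajectories of $X$ and $X^N$ (each of which lies in $\bar{D}$), and recalling that $d\Phi$ and $d\Phi^N$ point along inward normals at the respective boundary points, yields the monotonicity
\[
\int_0^t \bigl(X^N(s)-X(s),\, d\Phi^N(s)-d\Phi(s)\bigr) \le 0.
\]
This is what replaces the $f$-corrected estimate of \cite{aida-sasaki}, where one only had non-negativity modulo the curvature term in (\ref{condition C}).

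Applying It\^o's formula to $|X^N(t)-X(t)|^2$ produces
\begin{align*}
|X^N(t)-X(t)|^2 &= 2\int_0^t \bigl(X^N-X,\,\sigma(X^N)\,dB^N-\sigma(X)\circ dB\bigr) \\
&\quad + 2\int_0^t \bigl(X^N-X,\,b(X^N)-b(X)\bigr)\,ds + 2\int_0^t \bigl(X^N-X,\,d\Phi^N-d\Phi\bigr).
\end{align*}
The reflection line is $\le 0$ by the observation above, and the drift line is $O\bigl(\int_0^t |X^N-X|^2\,ds\bigr)$ by the Lipschitz continuity of $b$ and will be absorbed by Gronwall at the end. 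All of the real work is in estimating the first line. Following \cite{aida-sasaki}, I would rewrite $dB^N(s)=(\Delta_N B_k/\Delta_N)\,ds$ on $[t_{k-1}^N,t_k^N]$ and Taylor-expand $\sigma(X^N(s))$ and $X^N(s)-X(s)$ about $s=t_{k-1}^N$. The leading quadratic pieces in $\Delta_N B_k$ reconstruct the Stratonovich-to-It\^o correction $\tfrac12(\partial_j\sigma)\sigma\,dt$ and cancel against the analogous correction arising from $\sigma(X)\circ dB$; the residual splits into a martingale part, treated by BDG, and oscillation remainders whose moments are controlled by the deterministic variation bound on $\|\Phi^N\|_{[0,T]}$ from Lemma~2.4 of \cite{aida-sasaki}. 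Gronwall's inequality then delivers (\ref{sup outside t}) with rate $\Delta_N^{\theta/2}$, the loss $\theta<1$ reflecting moment estimates on $\max_k|\Delta_N B_k|$.

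For (\ref{sup inside t}), after taking $\max_{0\le t\le T}$ inside the expectation, the martingale part is handled by Doob--BDG, while the pathwise remainders call for a H\"older interpolation that introduces the factor $1/3$ in the exponent, exactly as in the corresponding step of \cite{aida-sasaki}. The main obstacle I expect is the bookkeeping of the Stratonovich expansion: one needs uniform $L^p$ oscillation estimates for $X^N$ on each subinterval that are insensitive to condition (B), relying instead on the convex Skorohod variation bound recalled before the statement. Once these are in hand, the monotonicity inequality from the first paragraph removes the only other delicate point, namely the handling of the reflection term.
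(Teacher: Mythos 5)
Your overall strategy is sound, and in fact the author's Remark 4.3 confirms that one can compare $X^N$ and $X$ directly without the Euler scheme; the paper itself instead routes the comparison through the Euler approximation $X^N_E$ (estimating $X$ vs.\ $X^N_E$ and then $X^N_E$ vs.\ $X^N$, as in \cite{aida-sasaki}), so your decomposition is a legitimate variant. The observation that convexity makes $\int_0^t(X^N-X,\,d\Phi^N-d\Phi)\le 0$, replacing the $f$-correction of condition (C), is exactly the simplification the paper exploits (``the proof is simpler\ldots because $f\equiv 0$'').

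The genuine gap is in how you propose to control the reflection and oscillation terms. You invoke ``the deterministic variation bound on $\|\Phi^N\|_{[0,T]}$ from Lemma 2.4 of \cite{aida-sasaki}'' (the bound $\|x^R\|_{[0,T]}\le 2(\sqrt2+1)(\|\sigma\|_\infty\|w\|_{[0,T]}+\|b\|_\infty T)$ recalled in Section 2). For $w=B^N$ one has $\|B^N\|_{[0,T]}=\sum_k|\Delta_N B_k|\sim\sqrt N$, so this bound degenerates as $N\to\infty$ and is only useful on a single partition subinterval $[t_{k-1}^N,t_k^N]$ (where it yields Lemma \ref{XN small interval}(2)). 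To sum the remainders over $k$ and to run the Gronwall/BDG arguments you need uniform-in-$N$ moment estimates over \emph{general} intervals, namely $E[|X^N(t)-X^N(s)|^p]\le C_p|t-s|^{p/2}$ and $E[\|\Phi^N\|_{[s,t]}^p]\le C_p|t-s|^{p/2}$ (Lemma \ref{moment estimate}). These do not follow from the total-variation bound; the paper obtains them via a separate induction on $p$ (using convexity to discard the $\Phi^N$ term) together with a new local-time estimate for convex domains. That estimate cannot be borrowed from \cite{aida-sasaki} because unbounded convex domains in $\RR^d$, $d\ge3$, need not satisfy condition (B); the paper's workaround is to restrict to $D\cap B(x_0,R)$ with $R=\|\xi-x_0\|_{\infty,[0,T]}$, compute explicit constants $\delta,\beta$ for which (B) holds there (Lemma 3.2), and deduce a bound on $\|\Phi^N\|_{[s,t]}$ in terms of the H\"older/$q$-variation control of the driver and the sup-norm of the solution (Lemma \ref{local time estimate convex case}), combined with Garsia--Rodemich--Rumsey for $Y^N$. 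None of this appears in your proposal, and without it the ``oscillation remainders'' cannot be estimated at the claimed rates; supplying these lemmas is where the actual new content of the proof lies.
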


\begin{thm}\label{conditions A B}
Assume the conditions {\rm (A)} and {\rm (B)} hold.
Then for any $\ep>0$, we have
\begin{align}
\lim_{N\to\infty}P\left(\max_{0\le t\le T}|X^N(t)-X(t)|\ge \ep\right)=0.
\end{align}
\end{thm}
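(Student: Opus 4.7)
The strategy is to adapt the proof in \cite{aida-sasaki}, whose use of condition (C) is pointwise at the boundary points actually visited by $X$ and $X^N$, by localizing on a large Euclidean ball: inside such a ball only a bounded piece of $\partial D$ is seen, and on this bounded piece a $C^2_b$ function realizing (C) can be manufactured from the local directions $l_x$ supplied by (B).

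Fix $x\in\bar D$, $R>0$, and introduce the stopping times
\begin{align*}
\tau_R=\inf\{t\ge 0 : |X(t)-x|\ge R\}\wedge T,\qquad \tau_R^N=\inf\{t\ge 0 : |X^N(t)-x|\ge R\}\wedge T.
\end{align*}
Cover the compact set $\partial D\cap\bar B(x,R+1)$ by finitely many balls $B(x_i,\delta')$ with $\delta'$ small, take a subordinate $C^\infty_c$ partition of unity $\{\phi_i\}$, and set $f_R(y)=\sum_i\phi_i(y)(l_{x_i},y-x_i)$, which is a $C^2_b$ function on $\RR^d$. A short computation, using $(l_{x_i},\bm n)\ge 1/\beta$ on $B(x_i,\delta)\cap\partial D$ and the smallness of $|y-x_i|$ on the supports of $\phi_i$, shows $(Df_R(z),\bm n)\ge c_R>0$ for every $z\in\partial D\cap B(x,R+1)$ and every $\bm n\in{\cal N}_z$. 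Combined with the exterior sphere estimate $(y-z,\bm n)+\frac{1}{2r_0}|y-z|^2\ge 0$ coming from (A), this furnishes condition (C) at every boundary point that $X$ or $X^N$ can visit before $\tau_R\wedge\tau_R^N$.

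With this local $f_R$ the pathwise $L^2$ argument of \cite{aida-sasaki} goes through unchanged on the stochastic interval $[0,\tau_R\wedge\tau_R^N]$, yielding
\begin{align*}
E\left[\max_{0\le t\le \tau_R\wedge\tau_R^N}|X^N(t)-X(t)|^2\right]\longrightarrow 0\qquad(N\to\infty),
\end{align*}
so that $P(\max_{0\le t\le T}|X^N(t)-X(t)|\ge\ep)\le P(\tau_R<T)+P(\tau_R^N<T)+o(1)$ as $N\to\infty$, for each fixed $R$.

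The remaining task is to show $\sup_N P(\tau_R^N<T)\to 0$ as $R\to\infty$; the corresponding bound for $\tau_R$ is standard for reflecting SDEs with bounded coefficients. This is the main obstacle, because the total variation $\|B^N\|_{[0,T]}$ diverges with $N$ and so the na\"ive Lemma~2.4 bound from \cite{aida-sasaki} applied with $w=B^N$ is useless. The plan is to bootstrap: run the same localization on a ball of radius $R'\gg R$, use the $L^2$ estimate on $[0,\tau_{R'}\wedge\tau_{R'}^N]$ to deduce $P(\max_t|X^N-X|\ge R'-R)\le P(\tau_{R'}<T)+o_N(1)$, and conclude via $P(\tau_R^N<T)\le P(\tau_{R/2}<T)+P(\max_t|X^N-X|\ge R/2)$. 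A secondary subtlety is that $c_R$, and hence the error constants inherited from \cite{aida-sasaki}, deteriorate with $R$, which is precisely what forces the weaker conclusion of convergence in probability rather than the $L^2$ rate of Theorem~\ref{convex case}.
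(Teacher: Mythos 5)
There is a genuine gap at the very first step: the construction of a single function $f_R$ on $B(x,R+1)$ by a partition of unity does not work, and this is precisely the obstruction that the paper's argument is designed to circumvent. Writing
\begin{align*}
Df_R(y)=\sum_i \phi_i(y)\,l_{x_i}+\sum_i (D\phi_i)(y)\,(l_{x_i},y-x_i),
\end{align*}
the first sum does pair with any ${\bm n}\in{\cal N}_z$ to give at least $1/\beta$ (the constraint $(l,{\bm n})\ge 1/\beta$ is convex in $l$, so convex combinations of the $l_{x_i}$ inherit it). But the second sum is \emph{not} small: on ${\rm supp}\,\phi_i$ one has $|y-x_i|\le\delta'$ while necessarily $\|D\phi_i\|_{\infty}\gtrsim 1/\delta'$ for a partition of unity subordinate to balls of radius $\delta'$, so each term is of order $1$, uniformly in $\delta'$. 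Since condition (B) provides no continuity whatsoever of the direction field $x\mapsto l_x$ (neighbouring $l_{x_i}$ may differ by order $1$), the identity $\sum_i D\phi_i=0$ produces no cancellation, and the error can overwhelm the $1/\beta$ lower bound. No choice of $\delta'$ repairs this. In other words, (A) and (B) yield condition (C) only one $\delta$-ball at a time (Saisho's Lemma~5.3, i.e.\ Lemma~\ref{local property A and B}); they are not known to yield a single $f$ valid on a compact piece of $\partial D$ --- if they did, Theorem~\ref{conditions A B} for bounded $D$ would follow from \cite{aida-sasaki} with no new work, and Saisho would not have needed his own localization to prove existence and uniqueness.

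Because of this, the actual proof takes a different route: it localizes the \emph{coefficients} rather than the function $f$, setting $\sigma^z=\sigma\chi(\cdot-z)$ and $b^z=b\chi(\cdot-z)$ so that the localized solution $X^z$ never leaves a ball of radius $2\delta'/3$ contained in a single $\delta$-ball where one fixed $f$ from Lemma~\ref{local property A and B} applies; it then proves the Wong--Zakai $L^2$ estimate and continuous dependence on the initial point for each localized equation, and glues these along a time partition adapted to a path $c$ in the support of the law of $X$, by induction on the time intervals. Your outer scaffolding (stopping times, plus the bootstrap on a larger radius $R'$ to control $P(\tau_R^N<T)$) is reasonable in spirit, but the whole argument rests on the existence of $f_R$ with $(Df_R(z),{\bm n})\ge c_R>0$, and that is the missing ingredient.
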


\begin{rem}
{\rm Rough path analysis clarifies the meaning of Wong-Zakai approximations.
We refer the readers for basic results of rough path analysis 
to \cite{lyons98, lq, lct, friz-victoir, gubinelli}
and for Wong-Zakai approximations of rough differential equations
driven by fractional Brownian motions
to \cite{friz-o, hu-nualart, deya-neuenkirch-tindel}.
Note that reflecting differential equations driven by rough paths
are defined
and the existence and estimates of the solutions are studied in
the author's recent paper~\cite{aida}.
See also \cite{ferrante-rovira} for reflecting differential equations
driven by fractional Brownian motions whose Hurst parameter
are greater than $1/2$.
}
\end{rem}

\section{Convex domains}

In this section, we prove Theorem~\ref{convex case}.
Below, we use the notation
$$
\|w\|_{\infty,[s,t]}=\max_{s\le u\le v\le t}|w(u)-w(v)|.
$$
The notation $\|w\|_{[s,t]}$ was already defined in 
Section 2.
We can prove the following in the same way as in
the proof of Lemma 2.3 in \cite{aida-sasaki}.

\begin{lem}\label{estimate on local time}
Assume conditions {\rm (A)} and {\rm (B)} hold.
Let $w$ be a $q$-variation continuous path such that
\begin{align}
|w(t)-w(s)|&
\le \omega(s,t)^{1/q}\qquad 0\le s\le t\le T
\end{align}
where $q\ge 1$ and $\omega$ is a control function.
That is, $\omega(s,t)$ is a nonnegative continuous function
of $(s,t)$ with $0\le s\le t\le T$ satisfying
$\omega(s,u)+\omega(u,t)\le \omega(s,t)$ for all 
$0\le s\le u\le t\le T$.
Then the local time $\phi$ of the solution to the
Skorohod problem associated with $w$ has the following estimate.
\begin{align}
\|\phi\|_{[s,t]}
&\le \beta
\left(\left\{\delta^{-1}G(\|w\|_{\infty,[s,t]})
+1\right\}^{q}\omega(s,t)+1
\right)
\left(G(\|w\|_{\infty,[s,t]})+2\right)\|w\|_{\infty,[s,t]},
\label{estimate on local time 1}
\end{align}
where 
\begin{align}
G(a)&=4\left\{1+\beta
\exp\left\{\beta\left(2\delta+a\right)/(2r_0)\right\}
\right\}\exp\left\{\beta\left(2\delta+a\right)/(2r_0)\right\}.
\end{align}
\end{lem}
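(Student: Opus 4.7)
The plan is to adapt the proof of Lemma~2.3 in \cite{aida-sasaki}: first establish a local bound for $\|\phi\|_{[u,v]}$ on a subinterval $[u,v]\subset[s,t]$ over which $\xi$ stays inside the ball $B(\xi(u),\delta)$, and then partition $[s,t]$ into finitely many such subintervals using the $q$-variation bound on $w$ and sum.

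For the local bound the two boundary conditions are used in succession. As pointed out in the preliminaries, condition~(A) is equivalent to
$$
\bigl(\xi(u)-\xi(r),\mathbf{n}(r)\bigr)+\frac{1}{2r_0}|\xi(r)-\xi(u)|^2\ge 0,\qquad r\in[u,v].
$$
Writing $\xi-\xi(u)=(w-w(u))+(\phi-\phi(u))$, integrating the above against $d\|\phi\|_{[u,\cdot]}$, and applying Gronwall's lemma yields a first estimate of the form
$$
\|\xi-\xi(u)\|_{\infty,[u,v]}\le C_1\bigl(\|\phi\|_{[u,v]}\bigr)\,\|w\|_{\infty,[u,v]},\qquad C_1(a)\sim \exp\!\bigl(a/(2r_0)\bigr).
$$
As long as this oscillation stays below $\delta$, every $\mathbf{n}(r)$ with $\xi(r)\in\partial D$ lies in the set over which condition~(B) provides $l_{\xi(u)}$, so $(l_{\xi(u)},\mathbf{n}(r))\ge 1/\beta$ and hence
$$
\frac{1}{\beta}\|\phi\|_{[u,v]}\le \bigl(l_{\xi(u)},\phi(v)-\phi(u)\bigr)\le \|\xi-\xi(u)\|_{\infty,[u,v]}+\|w\|_{\infty,[u,v]}.
$$
Substituting the first display into the second gives a self-referential inequality for $\|\phi\|_{[u,v]}$; solving it produces a local bound of the form $\|\phi\|_{[u,v]}\le \beta\bigl(G(\|w\|_{\infty,[u,v]})+2\bigr)\|w\|_{\infty,[u,v]}$, with the nested exponential in the definition of $G$ recording the two successive Gronwall steps.

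To globalise, I would partition $[s,t]$ as $s=s_0<\cdots<s_k=t$ with each $\omega(s_{i-1},s_i)$ no larger than $\bigl(\delta/G(\|w\|_{\infty,[s,t]})\bigr)^q$. The $q$-variation hypothesis then forces $\|w\|_{\infty,[s_{i-1},s_i]}\le \delta/G(\|w\|_{\infty,[s,t]})$, so the local bound applies on every piece with the uniform constant $\beta(G(\|w\|_{\infty,[s,t]})+2)$. The number $k$ of pieces needed is of order $(\delta^{-1}G)^q\omega(s,t)+1$, and summing the local bounds across the partition reproduces the stated inequality (the outer factor $\{\delta^{-1}G+1\}^q\omega+1$ is the standard slightly loose form of this partition count). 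The main obstacle is threading the two Gronwall bootstraps through each other carefully enough that the implicit bound on $\|\phi\|_{[u,v]}$ closes into an \emph{explicit} inequality with precisely the function $G$ stated; the $2\delta$ shift appearing in the argument of $G$ enters when the radius of the ball used in condition~(B) is combined with the maximum allowed oscillation of $\xi$ inside that ball.
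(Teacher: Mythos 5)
Your proposal follows the same route as the paper, which itself gives no proof beyond citing Lemma 2.3 of \cite{aida-sasaki}: a Gronwall estimate from condition (A) controlling the oscillation of $\xi$ by that of $w$ times $\exp(\|\phi\|/(2r_0))$, the uniform direction $l_x$ from condition (B) to bound $\|\phi\|$ on intervals where $\xi$ stays $\delta$-close to a single boundary point, and a partition count via the superadditivity of the control function. The one point to tighten is that the circularity is broken not by \lq\lq solving'' the self-referential inequality (which as stated admits arbitrarily large solutions) but by first extracting the a priori bound $\|\phi\|_{[u,v]}\le\beta\left(2\delta+\|w\|_{\infty,[u,v]}\right)$ directly from (B) on a first-exit interval and only then feeding it into the exponential Gronwall constant --- exactly the mechanism you point to in your closing remark about where the $2\delta$ enters $G$.
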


The above estimate is one of key for the proof in \cite{aida-sasaki}.
Since the unbounded convex domains in $\RR^d$ ($d\ge 3$)
may not satisfy the condition (B), we cannot use this estimate.
However, it is possible to estimate the total variation $\|\phi\|_{[s,t]}$
by $\|w\|_{\infty,[s,t]}$ together with the sup-norm of $\xi$
since we can give an estimate for the numbers $\beta$ and $\delta$ 
in the condition (B) for
bounded convex domains.

\begin{lem}
Let $D$ be a convex domain in $\RR^d$.
Let $x_0\in D$ and assume that there exists 
$R_0>0$ such that $\overline{B(R_0,x_0)}\subset D$.
Let $R\ge R_0$ and define $D_R=D\cap B(R,x_0)$.
The Condition {\rm (B)} holds for the bounded convex domain 
$D_R$ with $\delta=R_0/2$ and $\beta=
\left(1+\left(\frac{2R}{R_0}\right)^2\right)^{1/2}$.
\end{lem}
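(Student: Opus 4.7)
The plan is to exploit the convexity of $D_R = D \cap B(R,x_0)$ together with the inscribed-ball hypothesis $\overline{B(R_0,x_0)} \subset D_R$, taking at each boundary point the direction $l_x$ pointing toward the center $x_0$. Concretely, for $x \in \partial D_R$ I set
$$
l_x := \frac{x_0 - x}{|x_0 - x|}.
$$
This is well defined because $B(R_0,x_0) \subset D_R$ forces $|x_0 - x| \geq R_0 > 0$, and $D_R \subset B(R,x_0)$ gives $|x_0 - x| \leq R$.

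The central tool is the following supporting-hyperplane property of ${\cal N}_y$ in the convex setting: for every $y \in \partial D_R$ and every $\bm{n} \in {\cal N}_y$ (taken with respect to $D_R$),
$$
(z - y, \bm{n}) \geq 0 \quad \mbox{for all } z \in \overline{D_R}.
$$
Indeed, from $\bm{n} \in {\cal N}_{y,r}$ together with closedness of $\overline{D_R}$ one gets $|z - (y - r\bm{n})| \geq r$ for every $z \in \overline{D_R}$, equivalently $(z - y, \bm{n}) \geq -|z-y|^2/(2r)$. Applying this to $z_\lambda = (1-\lambda) y + \lambda z \in \overline{D_R}$ (by convexity, for $\lambda \in [0,1]$), dividing by $\lambda > 0$, and letting $\lambda \to 0^+$, yields the claim.

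Specializing to $z = x_0 - R_0 \bm{n} \in \overline{B(R_0,x_0)} \subset \overline{D_R}$ gives the uniform lower bound $(x_0 - y, \bm{n}) \geq R_0$ for all $y \in \partial D_R$ and $\bm{n} \in {\cal N}_y$. Hence for any $y \in B(x,\delta) \cap \partial D_R$ with $\delta = R_0/2$,
$$
(l_x, \bm{n}) = \frac{(x_0 - y, \bm{n}) + (y - x, \bm{n})}{|x_0 - x|} \geq \frac{R_0 - \delta}{|x_0 - x|} \geq \frac{R_0/2}{R} = \frac{R_0}{2R}.
$$
Since $\beta = \sqrt{1 + (2R/R_0)^2} \geq 2R/R_0$, this gives $(l_x, \bm{n}) \geq 1/\beta$, which is exactly (B) with the stated constants. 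The only step requiring any genuine argument is the supporting-hyperplane property, obtained from the ball-tangency definition of ${\cal N}_y$ by the limiting argument above; the remaining estimates are elementary consequences of the inscribed ball and the triangle inequality, and I expect no substantive obstacle.
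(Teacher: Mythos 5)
Your proof is correct, but it takes a genuinely different route from the paper. The paper does not verify (B) directly: it verifies the uniform interior cone condition (B') for $D_R$ by a geometric argument --- taking $l_x$ pointing from $x$ toward $x_0$, slicing the inscribed ball $\overline{B(R_0,x_0)}$ by the hyperplane through $x_0$ orthogonal to $l_x$, and showing that the cone $C(y,l_x,\alpha)$ with $\alpha=R/\sqrt{R^2+(R_0/2)^2}$ stays inside $\overline{D_R}$ near $x$ --- and then invokes the implication (B')$\Rightarrow$(B) with $\beta=(1-\alpha^2)^{-1/2}$ stated in Section 2, which yields exactly $\beta=(1+(2R/R_0)^2)^{1/2}$. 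You instead choose the same $l_x$ but verify (B) directly: from the characterization $(z-y,\bm{n})+\frac{1}{2r}|z-y|^2\ge 0$ of $\bm{n}\in{\cal N}_{y,r}$ (which the paper also records), convexity and the limit $\lambda\to 0^+$ along the segment give the supporting-hyperplane inequality $(z-y,\bm{n})\ge 0$ on $\overline{D_R}$, and testing with $z=x_0-R_0\bm{n}\in\overline{B(R_0,x_0)}\subset\overline{D_R}$ gives the uniform bound $(x_0-y,\bm{n})\ge R_0$, from which $(l_x,\bm{n})\ge R_0/(2R)\ge 1/\beta$ follows by the triangle inequality. Your argument is more elementary and self-contained (it avoids the cone-slicing picture and the (B')$\Rightarrow$(B) reduction), and it isolates the single analytic fact actually needed; the paper's route buys the slightly stronger geometric conclusion (B') for $D_R$, though that extra strength is not used elsewhere. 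All constants match, and I see no gap in your argument.
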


\begin{proof}
We prove the condition (B').
Let $x\in \partial D_R$.
Let $l_x$ be the unit vector in the direction
from $x$ to $x_0$.
Let $S(x_0)$ be a $d-1$ dimensional ball which is the slice of
the ball $\overline{B(R_0,x_0)}$ by a hyperplane $H(x_0)$
that passes through $x_0$
and is orthogonal to $l_x$.
Let $\alpha=\frac{R}{\sqrt{R^2+(R_0/2)^2}}$.
Then for any point $y\in B(\delta,x)$,
it holds that $C(y,l_x,\alpha)\cap H(x_0)\subset S(x_0)$.
Hence for any $y\in B(\delta,x)\cap \partial D_R$,
$C(y,l_x,\alpha)\cap B(x,\delta)\subset \overline{D_R}$
which implies condition (B').
\end{proof}

\begin{lem}\label{local time estimate convex case}
Let $D$ be a convex domain.
Let $x_0\in D$ and assume that there exists 
$R_0>0$ such that $\overline{B(R_0,x_0)}\subset D$.
Let $w(t)$ $(0\le t\le T)$ be a continuous 
$q$-variation path with the control function $\omega$
on $\RR^d$ with $w(0)\in \bar{D}$ and $q\ge 1$.
Assume that there exists a solution $(\xi,\phi)$ to the
Skorohod problem associated with
$w$.
Then it holds that

\begin{align}
\|\phi\|_{[s,t]}&\le 10\Biggl[
\left\{16R_0^{-1}
\left(
1+4R_0^{-2}
\|\xi-x_0\|_{\infty,[0,T]}^2
\right)^{1/2}+1
\right\}^q\omega(s,t)+1\Biggr]\nonumber\\
&\quad\quad \times\left(1+4R_0^{-2}
\|\xi-x_0\|_{\infty,[0,T]}^2\right)
\|w\|_{\infty,[s,t]}.
\end{align}

\end{lem}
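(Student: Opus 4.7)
The strategy is to reduce to the bounded convex case handled by the preceding two lemmas. Pick $R>R_{0}$ large enough that the trajectory $\xi([0,T])$ is contained in the open ball $B(R,x_{0})$, and set $D_{R}:=D\cap B(R,x_{0})$. Because $\xi$ never touches the new piece of boundary $\partial B(R,x_{0})\cap\bar D$, the reflection measure $\phi$ is not affected by the truncation, and hence $(\xi,\phi)$ restricted to $[0,T]$ is still a solution of the Skorohod problem associated with $w$ on the bounded convex domain $D_{R}$.

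Now the preceding lemma tells us that $D_{R}$ satisfies condition (B) with $\delta = R_{0}/2$ and $\beta = \bigl(1+(2R/R_{0})^{2}\bigr)^{1/2}$, and by convexity condition (A) is automatic with any $r_{0}>0$. I would therefore apply Lemma~\ref{estimate on local time} to $D_{R}$ and pass to the limit $r_{0}\to\infty$. In this limit every factor $\exp\bigl(\beta(2\delta+a)/(2r_{0})\bigr)$ tends to $1$, so $G(a)\to 4(1+\beta)$, and the bound in Lemma~\ref{estimate on local time} collapses to
\[
\|\phi\|_{[s,t]} \;\le\; \beta\Bigl(\bigl(8(1+\beta)R_{0}^{-1}+1\bigr)^{q}\omega(s,t)+1\Bigr)\bigl(6+4\beta\bigr)\|w\|_{\infty,[s,t]}.
\]
Using $\beta\ge 1$ to estimate $1+\beta\le 2\beta$ inside the $q$-th power and $\beta(6+4\beta)\le 10\beta^{2}$ in front, and then substituting $\beta^{2}=1+4R^{2}/R_{0}^{2}$, produces exactly the form claimed, with $R$ in place of $\|\xi-x_{0}\|_{\infty,[0,T]}$. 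The right-hand side is continuous and monotone in $R$, so letting $R$ decrease to $\|\xi-x_{0}\|_{\infty,[0,T]}$ yields the stated inequality.

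The only conceptually nontrivial step is the localization in the first paragraph: one has to be careful to choose $R$ strictly greater than the sup of $|\xi(t)-x_{0}|$ so that $\xi$ does not feel the artificial boundary and $(\xi,\phi)$ genuinely solves the Skorohod problem on $D_{R}$. Everything else is bookkeeping of constants, and the crucial feature that makes the argument work even when $D$ is unbounded (hence possibly fails condition (B)) is precisely that condition (A) holds on $D_{R}$ for arbitrarily large $r_{0}$, allowing the $r_{0}\to\infty$ passage that kills the exponential factors in $G$.
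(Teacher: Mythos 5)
Your proposal is correct and follows essentially the same route as the paper: restrict to the bounded convex domain $D\cap B(x_0,R)$, invoke the preceding lemma for the constants $\delta=R_0/2$ and $\beta=(1+(2R/R_0)^2)^{1/2}$ in condition (B), let $r_0\to\infty$ in Lemma~\ref{estimate on local time} so that $G(a)$ collapses to $4(1+\beta)$, and then tidy the constants. The only (harmless) difference is that the paper takes $R=\|\xi-x_0\|_{\infty,[0,T]}$ directly while you take $R$ slightly larger and pass to the limit, and you carry out explicitly the constant bookkeeping that the paper leaves implicit.
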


\begin{proof}
Note that $\xi$ is the solution of the
Skorohod problem associated with $w$
on $\overline{D\cap B(x_0,R)}$,
where
$R=\|\xi-x_0\|_{\infty,[0,T]}$.
This domain satisfies (B) with the constants
$\delta$ and $\beta$ specified in the above lemma.
In the lemma, letting $r_0\to\infty$, $G$ reads
\begin{align}
G(a)&=
4\left\{1+\sqrt{1+(2R_0^{-1}R)^2}\right\}.
\end{align}
By applying Lemma~\ref{estimate on local time}, we complete the proof.
\end{proof}

To prove Theorem~\ref{convex case},
we need moment estimates for increments of
$X^N$ and $\Phi^N$.

\begin{lem}\label{XN small interval}
Assume $D$ is a convex domain.
For the Wong-Zakai approximation $X^N$,
we define
\begin{align}
Y^N(t,x,\omega) 
&=x+\int_0^t\sigma(X^N(s,x,\omega))
dB^{N}(s,\omega)+\int_0^tb(X^N(s,x,\omega))ds.
\end{align}

\noindent
$(1)$ For all $p\ge 1$, we have

\begin{align}
E[\|Y^N\|^{2p}_{\infty,[s,t]}] \le C_p|t-s|^p.\label{YN general interval}
\end{align}

\noindent
$(2)$ Let $t_{k-1}\le s<t\le t_k$.
Then we have for all $p\ge 1$,
\begin{align}
& E[|X^N(t)-X^N(s)|^{2p}~|~{\cal F}_{t_{k-1}}]\le C_p|t-s|^{p},
\label{XN small interval 2}\\
& \|\Phi^N\|_{[s,t]}\le C\left(|\Delta B_k|\frac{t-s}{\Delta}+(t-s)\right),
\end{align}
where $C_p$ and $C$ are positive constants.
\end{lem}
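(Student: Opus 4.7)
The plan is to prove part $(2)$ first, since it provides the pathwise increment control of $X^N$ on a single grid interval that is then used to handle part $(1)$.

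For part $(2)$, the key observation is that $B^N$ is linear on $[t_{k-1},t_k]$, so on any subinterval $[s,t]\subset[t_{k-1},t_k]$ the Skorohod driver $Y^N$ has bounded variation
\[
\|Y^N\|_{[s,t]}\le\|\sigma\|_{\infty}\|B^N\|_{[s,t]}+\|b\|_{\infty}(t-s)=\|\sigma\|_{\infty}|\Delta B_k|(t-s)/\Delta+\|b\|_{\infty}(t-s).
\]
Applying Lemma~$2.4$ of \cite{aida-sasaki} (recalled in the discussion before Theorem~\ref{convex case}) to the reflecting ODE driven by this bounded-variation path on the convex domain $D$ over $[s,t]$ produces
\[
\|\Phi^N\|_{[s,t]}\le C\bigl(\|\sigma\|_{\infty}\|B^N\|_{[s,t]}+\|b\|_{\infty}(t-s)\bigr)=C\bigl(|\Delta B_k|(t-s)/\Delta+(t-s)\bigr),
\]
which is the second inequality. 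Combining this with $|X^N(t)-X^N(s)|\le|Y^N(t)-Y^N(s)|+\|\Phi^N\|_{[s,t]}$ and taking the $2p$-th conditional moment in $\mathcal{F}_{t_{k-1}}$ (under which $\Delta B_k\sim N(0,\Delta)$) yields $C_p\bigl((t-s)^{2p}\Delta^{-p}+(t-s)^{2p}\bigr)\le C_p(t-s)^p$ via $(t-s)\le\Delta\le T$.

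For part $(1)$, the drift contributes at most $\|b\|_{\infty}(t-s)$ to $\|Y^N\|_{\infty,[s,t]}$, so the task reduces to the stochastic integral $M(u):=\int_s^u\sigma(X^N(r))\,dB^N(r)$. When $t-s\le\Delta$, the interval meets at most two grid cells and the pathwise estimate $|M(v)-M(u)|\le\|\sigma\|_{\infty}\|B^N\|_{[u,v]}\le\|\sigma\|_{\infty}(|\Delta B_k|+|\Delta B_{k+1}|)(v-u)/\Delta$ already has $L^{2p}$-norm $O((t-s)^{1/2})$. Otherwise I split $M(u)-M(s)$ at the grid points into two partial end-pieces and an interior sum $\sum_i\sigma_i\Delta B_i$ with $\sigma_i:=\Delta^{-1}\int_{t_{i-1}}^{t_i}\sigma(X^N)\,dr$. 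Writing $\sigma_i=\sigma(X^N(t_{i-1}))+\rho_i$, the first summand is a discrete martingale in $(\mathcal{F}_{t_i})$, so Doob's inequality combined with Burkholder--Davis--Gundy bounds the $L^{2p}$-norm of its running maximum in $u$ by $C_p\|\sigma\|_{\infty}(t-s)^{1/2}$.

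The residual $\sum_i\rho_i\Delta B_i$ is where part $(2)$ is essential. A pathwise version of the part $(2)$ argument gives $|X^N(r)-X^N(t_{i-1})|\le C(|\Delta B_i|+\Delta)$ on $[t_{i-1},t_i]$, so $|\rho_i|\le C\|\nabla\sigma\|_{\infty}(|\Delta B_i|+\Delta)$, hence $\|\rho_i\Delta B_i\|_{L^{2p}}=O(\Delta)$. Minkowski's inequality then gives an $L^{2p}$-contribution of order $(k-j)\Delta=O(t-s)$ from the sum, whose $2p$-th moment is $O((t-s)^{2p})\le T^p(t-s)^p$. The two partial end-pieces are pathwise dominated by $\|\sigma\|_{\infty}|\Delta B_\bullet|$ with $L^{2p}$-norm $O(\sqrt{\Delta})\le O(\sqrt{t-s})$. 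Assembling these pieces yields $E[\|Y^N\|^{2p}_{\infty,[s,t]}]\le C_p(t-s)^p$. The main technical obstacle is precisely the non-adaptedness of $\sigma_i$ to $\mathcal{F}_{t_{i-1}}$, which forces the martingale-plus-error decomposition and thus requires the pathwise increment control from part $(2)$ to be in place first.
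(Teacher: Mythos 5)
Your argument is correct, and it is essentially the approach the paper intends: the paper's own ``proof'' merely cites Lemmas 4.3 and 4.4 of \cite{aida-sasaki} (noting that condition (B) can be replaced by convexity because the Skorohod problem for bounded-variation drivers is still uniquely solvable), and your reconstruction --- the single-cell pathwise bound via the linearity of $B^N$ and the total-variation estimate of Lemma 2.4 of \cite{aida-sasaki}, followed by the martingale-plus-remainder decomposition of $\int \sigma(X^N)\,dB^N$ at the grid points --- is exactly the style of argument used there and again in the proof of Lemma~\ref{moment estimate} of this paper. The only point worth tightening is that for the running supremum in part (1) the within-cell oscillation must be maximized over all $\approx (t-s)/\Delta$ cells, not just the two end-pieces, but $E[\max_i|\Delta B_i|^{2p}]\le \sum_i E[|\Delta B_i|^{2p}]\le C_p (t-s)\Delta^{p-1}\le C_p(t-s)^p$ closes this immediately.
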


\begin{proof}
These assertions can be proved by the same way as the proof
of Lemma 4.3 and Lemma 4.4 in \cite{aida-sasaki}.
We assumed the condition (B) in those lemmas but 
we can argue in the same way since Skorohod equation associated
with the continuous bounded variation path is uniquely solved
under the convexity of $D$.
\end{proof}

\begin{lem}\label{moment estimate}
Assume $D$ is convex.
Let $p\ge 2$ be an integer.
For $0\le s\le t\le T$, we have
\begin{align}
E\left[|X(t)-X(s)|^{p}\right]&\le C_p|t-s|^{p/2},\label{X general interval}\\
E\left[|X^N(t)-X^N(s)|^{p}\right]&\le C_p|t-s|^{p/2},\label{XN general interval}\\
E\left[\|\Phi^N\|_{[s,t]}^{p}\right]&\le C_p|t-s|^{p/2},\label{PhiN general interval}
\end{align}
where $C_p$ is a positive number independent of $N$.
\end{lem}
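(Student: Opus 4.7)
\emph{Plan.} The three estimates will be obtained by adapting the strategy used in Lemmas~4.3 and~4.4 of \cite{aida-sasaki}, with the convex-domain local-time bound Lemma~\ref{local time estimate convex case} replacing its (A)-(B)-based analogue Lemma~\ref{estimate on local time}. I would handle the SDE estimate first, then the Wong-Zakai estimates for $X^N$ and $\Phi^N$.

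For the bound $E[|X(t) - X(s)|^p] \le C_p(t-s)^{p/2}$, I would apply It\^o's formula to $|X(u) - X(s)|^2$ on $u \in [s,t]$. Convexity of $D$, together with $X(s) \in \bar D$, implies $(X(u) - X(s), d\Phi(u)) \le 0$, so the reflection contribution is non-positive. The surviving terms comprise a stochastic integral, a bounded drift term, and a bounded quadratic-variation contribution. Taking suprema and $p$-th powers, BDG then yields an integral inequality for $f(t) := E[\sup_{s \le u \le t}|X(u) - X(s)|^{2p}]$ of the form $f(t) \le C_p(t-s)^p + C_p\int_s^t f(u)\,du$, and Gr\"onwall closes the estimate.

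For the Wong-Zakai bounds, I plan to apply Lemma~\ref{local time estimate convex case} pathwise to the driving path $w^N(u) := x + \int_0^u \sigma(X^N)\,dB^N + \int_0^u b(X^N)\,dr$ of the Skorohod problem solved by $(X^N,\Phi^N)$. A convenient choice is $q = 1/\alpha$ for some $\alpha<1/2$: since $B^N$ inherits $\alpha$-H\"older regularity from $B$ with a constant $M_\alpha$ having all $L^p$-moments (Kolmogorov's theorem), one may take $\omega(s,t) = C M_\alpha^q(t-s)$. Lemma~\ref{local time estimate convex case} then produces a pathwise bound of the schematic form
\begin{align*}
\|\Phi^N\|_{[s,t]} \le F\bigl(\|X^N - x_0\|_{\infty,[0,T]}\bigr)\,\bigl[C_1 M_\alpha^q(t-s) + C_2\bigr]\,\|w^N\|_{\infty,[s,t]},
\end{align*}
with $F$ a polynomial. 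H\"older's inequality then reduces the target $E[\|\Phi^N\|_{[s,t]}^p]\le C_p(t-s)^{p/2}$ to three inputs: (i) a uniform-in-$N$ $L^p$ bound on $\|X^N - x_0\|_{\infty,[0,T]}$; (ii) moment bounds on $M_\alpha$; and (iii) the estimate $E[\|w^N\|_{\infty,[s,t]}^{kp}] \le C_{k,p}(t-s)^{kp/2}$, which is Lemma~\ref{XN small interval}(1) applied to $Y^N = w^N$. The modulus bound for $X^N$ then follows from the decomposition $X^N(t)-X^N(s) = (w^N(t)-w^N(s)) + (\Phi^N(t)-\Phi^N(s))$ combined with (iii) and the $\Phi^N$-estimate. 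The main obstacle is (i): to obtain the sup-norm moment bound uniformly in $N$, I would apply the chain rule to $|X^N(u)-x_0|^2$, discard the reflection term via the convexity inequality $(X^N - x_0, d\Phi^N) \le 0$, and rewrite $\int_0^t(X^N - x_0, \sigma(X^N)\,dB^N)$ subinterval-wise as a sum whose leading part is a discrete martingale (controlled by BDG), with lower-order corrections handled by Lemma~\ref{XN small interval}; a Gr\"onwall closure then furnishes the bound uniformly in $N$.
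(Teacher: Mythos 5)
Your proposal is correct in outline but organizes the argument quite differently from the paper, and it is worth being clear about where the real work sits. For (\ref{X general interval}) the paper localizes with $\tau_R=\inf\{t:X(t)\notin B(x,R)\}$, applies It\^o's formula directly to $|X^{\tau_R}(t)-X^{\tau_R}(s)|^{p}$ for even $p$, kills the reflection term by convexity, takes expectations (so the martingale term vanishes and no BDG is needed) and runs an induction on $p$; your one-shot route via It\^o at level $2$, $p$-th powers, BDG and Gr\"onwall/absorption also works, but you should retain the localization, which you omit, to guarantee the a priori finiteness needed before absorbing the BDG term. The more substantial divergence is in the Wong--Zakai bounds. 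The paper proves (\ref{XN general interval}) \emph{first}, by a lengthy chain-rule computation on $|X^N(t)-X^N(s)|^{p}$ in which $\int(X^N-X^N(s),\sigma(X^N)\,dB^N)$ is decomposed subinterval-wise into a discrete-martingale leading term plus corrections controlled by Lemma~\ref{XN small interval}(2), again with induction on even $p$; it then obtains (\ref{PhiN general interval}) exactly as you propose, from Lemma~\ref{local time estimate convex case} together with H\"older/GRR control of $Y^N$ and the already-proved bound on $X^N$. You invert the order: a uniform-in-$N$ moment bound on $\|X^N-x_0\|_{\infty,[0,T]}$, then $\Phi^N$ via the local-time lemma, then the increments of $X^N$ from $X^N=Y^N+\Phi^N$. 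The bootstrap itself is sound (the factor $\{\cdot\}^{q}\omega(s,t)+1$ is $O(1)$ in your setup and the $(t-s)^{p/2}$ comes entirely from $E[\|Y^N\|_{\infty,[s,t]}^{2p}]^{1/2}$), and it buys you freedom from the induction on $p$ for the increments. But your item (i) is not a peripheral obstacle: the bound $E[\|X^N\|_{[0,T]}^{p}]<\infty$ quoted from Lemma 2.4 of \cite{aida-sasaki} is \emph{not} uniform in $N$ (since $E[\|B^N\|_{[0,T]}^{p}]\sim N^{p/2}$), so the uniform sup-norm moment bound must be produced from scratch, and your sketch of it --- discrete martingale plus corrections via Lemma~\ref{XN small interval}(2), discrete BDG, absorption --- is essentially the same computation that constitutes the bulk of the paper's proof of (\ref{XN general interval}). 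In short, the difficulty is relocated rather than removed; carried out in full, your route closes and yields an arguably cleaner presentation, but it is not shorter.
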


\begin{proof}
Let $\tau_R=\inf\{t>0~|~X(t,x,w)\notin B(x,R)\}$
and $X^{\tau_R}(t)=X(t\wedge \tau_R)$.
For
(\ref{X general interval}),
it suffices to prove
$E[|X^{\tau_R}(t)-X^{\tau_R}(s)|^{p}|]\le C_p|t-s|^{p/2}$
for all even positive integers $p$ and $0\le s\le t\le T$,
where $C_p$ is independent of $R$.
We prove this by an induction on $p$.
Let $\tilde{b}=b+\frac{1}{2}\tr (D\sigma)(\sigma)$.
By the Ito formula,
\begin{align}
|X^{\tau_R}(t)-X^{\tau_R}(s)|^{2}
& =
2\int_{s\wedge \tau_R}^{t\wedge\tau_R}
(X^{\tau_R}(u)-X^{\tau_R}(s),\sigma(X^{\tau_R}(u))dB(u))
\nonumber\\
&\quad +2\int_{s\wedge \tau_R}^{t\wedge\tau_R}
(X^{\tau_R}(u)-X^{\tau_R}(s),\tilde{b}(X^{\tau_R}(u)))du\nonumber\\
&\quad +
\int_{s\wedge\tau_R}^{t\wedge \tau_R}
\tr\left((\sigma\,{}^t\sigma)(X^{\tau_R}(u))\right)du\nonumber\\
&\quad  +2\int_{s\wedge \tau_R}^{t\wedge \tau_R}
(X^{\tau_R}(u)-X^{\tau_R}(s), d\Phi(u)).
\end{align}
Noting the non-positivity of the term containing
$\Phi$ which follows from the convexity of $D$ and
taking the expectation, we have
\begin{align}
\lefteqn{E\left[|X^{\tau_R}(t)-X^{\tau_R}(s)|^{2}\right]}\nonumber\\
&\le C\int_s^t
E\left[|X^{\tau_R}(u)-X^{\tau_R}(s)|^{2}\right] du
+C(t-s)
\end{align}
which implies $E[|X^{\tau_R}(t)-X^{\tau_R}(s)|^2]
\le C(t-s)$.
Let $p\ge 4$ and suppose the inequality holds for $p-2$.
\begin{align}
\lefteqn{|X^{\tau_R}(t)-X^{\tau_R}(s)|^{p}}\nonumber\\
& =
p\int_{s\wedge \tau_R}^{t\wedge\tau_R}
|X^{\tau_R}(u)-X^{\tau_R}(s)|^{p-2}
(X^{\tau_R}(u)-X^{\tau_R}(s),\sigma(X^{\tau_R}(u))dB(u))
\nonumber\\
&\quad +p\int_{s\wedge \tau_R}^{t\wedge\tau_R}|X^{\tau_R}(u)-X^{\tau_R}(s)|^{p-2}
(X^{\tau_R}(u)-X^{\tau_R}(s),\tilde{b}(X^{\tau_R}(u)))du\nonumber\\
&\quad +
\frac{p}{2}\int_{s\wedge\tau_R}^{t\wedge \tau_R}
|X^{\tau_R}(u)-X^{\tau_R}(s)|^{p-2}
\tr\left((\sigma\,{}^t\sigma)(X^{\tau_R}(u))\right)du\nonumber\\
&\quad +
\frac{1}{2}p\left(p-2\right)\int_{s\wedge \tau_R}^{t\wedge\tau_R}
|X^{\tau_R}(u)-X^{\tau_R}(s)|^{p-4}
|{}^t\sigma(X^{\tau_R}(u)\left(X^{\tau_R}(u)-X^{\tau_R}(s)\right)|^2du\nonumber\\
&\quad  +p\int_{s\wedge \tau_R}^{t\wedge \tau_R}|X^{\tau_R}(u)-X^{\tau_R}(s)|^{p-2}
(X^{\tau_R}(u)-X^{\tau_R}(s), d\Phi(u)).\nonumber
\end{align}
Hence we have
\begin{align}
 E\left[|X^{\tau_R}(t)-X^{\tau_R}(s)|^{p}\right]
&\le
C_p\left(\int_s^t
E\left[|X^{\tau_R}(t)-X^{\tau_R}(s)|^{p-2}\right]+
E\left[|X^{\tau_R}(t)-X^{\tau_R}(s)|^{p-3}\right]
\right)du\nonumber\\
&\le
C_p
\left(\int_s^t
E\left[|X^{\tau_R}(u)-X^{\tau_R}(s)|^{p-2}\right]+
E\left[|X^{\tau_R}(u)-X^{\tau_R}(s)|^{p}\right]
\right)du\nonumber
\end{align}
which implies
\begin{align}
 E\left[|X^{\tau_R}(t)-X^{\tau_R}(s)|^{p}\right]
&\le C_pe^{C_p(t-s)}
\int_s^t
E\left[|X^{\tau_R}(u)-X^{\tau_R}(s)|^{p-2}\right]du\nonumber\\
&\le C_p(t-s)^{p/2}.
\end{align}
This proves (\ref{X general interval}).
Next we prove (\ref{XN general interval}).
Again, is is sufficient to prove the case where $p$ is an even number.
We prove this by an induction on $p$ 
similarly to (\ref{X general interval}).
By Lemma 2.4 in \cite{aida-sasaki},
we have $E[\|X^N\|_{[0,T]}^p]<\infty$
for any $p\ge 1$.
We consider the case where $p=2$.
Let $s=t_l<t_m=t$.
By the chain rule,
\begin{align}
 \lefteqn{|X^N(t)-X^N(s)|^2}\nonumber\\
&
=
2\int_s^t(X^N(u)-X^N(s),\sigma(X^N(u))dB^N(u))
+2\int_s^t(X^N(u)-X^N(s),b(X^N(u)))du\nonumber\\
&\quad +2\int_s^t(X^N(u)-X^N(s), d\Phi^N(u))\nonumber\\
&\le
2\int_s^t(X^N(u)-X^N(s),\sigma(X^N(u))dB^N(u))
+2\int_s^t(X^N(u)-X^N(s),b(X^N(u)))du\nonumber \\
& =:I_1+I_2,
\end{align}
where we have used the non-positivity of the third term
which follows from the convexity of $D$.
We estimate $I_1, I_2$.
We have
\begin{align}
I_1&=
\sum_{k=l+1}^m
2\int_{t_{k-1}}^{t_k}
\left(X^N(u)-X^N(s),
\sigma(X^N(u))
\frac{\Delta B_k}{\Delta}\right)du.
\end{align}
\begin{align}
I_{1,k}&:=\int_{t_{k-1}}^{t_k}
(X^N(u)-X^N(s),\sigma(X^N(u))\frac{\Delta B_k}{\Delta})du\nonumber\\
& =
\left(X^N(t_{k-1})-X^N(s), \sigma(X^N(t_{k-1}))\Delta B_k\right)\nonumber\\
&\quad +
\int_{t_{k-1}}^{t_k}\left(
X^N(u)-X^N(t_{k-1}), \sigma(X^N(t_{k-1}))
\frac{\Delta B_k}{\Delta}\right)du\nonumber\\
&\quad +\int_{t_{k-1}}^{t_k}
\left(X^N(t_{k-1})-X^N(s), \left(\sigma(X^N(u))-
\sigma(X^N(t_{k-1}))\right)\frac{\Delta B_k}{\Delta}\right)\nonumber\\
&\quad +
\int_{t_{k-1}}^{t_k}\left(
X^N(u)-X^N(t_{k-1}), \left(\sigma(X^N(u))-\sigma(X^N(t_{k-1}))\right)
\frac{\Delta B_k}{\Delta}\right)du
\end{align}
By Lemma~\ref{XN small interval} (2),
\begin{align}
E\left[I_{1,k}\right]
&\le C\left(1+E[|X^N(t_{k-1})-X^N(s)|]\right)\Delta\nonumber\\
&\le C\left(\int_{t_{k-1}}^{t_k}\left(E[|X^N(u)-X^N(s)|^2]+1\right)du\right).
\end{align}
Thus, we obtain
\begin{align}
E[|X^N(t)-X^N(s)|^2]&\le
C\left((t-s)+\int_s^tE[|X^N(u)-X^N(s)|^2]du\right).
\label{XN Gronwall}
\end{align}
Again by noting Lemma~\ref{XN small interval} (2),
we see that (\ref{XN Gronwall}) holds for any
$0\le s\le t\le T$.
Applying
the Gronwall inequality, we get
the inequality (\ref{XN general interval}) with $p=2$.
Let $p\ge 4$.
Let $s=t_l<t_m=t$.
By the chain rule,
\begin{align}
 |X^N(t)-X^N(s)|^p&
=
p\int_s^t|X^N(u)-X^N(s)|^{p-2}(X^N(u)-X^N(s),\sigma(X^N(u))dB^N(u))
\nonumber\\
& +p\int_s^t|X^N(u)-X^N(s)|^{p-2}
(X^N(u)-X^N(s),b(X^N(u)))du\nonumber\\
& +p\int_s^t|X^N(u)-X^N(s)|^{p-2}
(X^N(u)-X^N(s), d\Phi^N(u))\nonumber\\
&\le
p\int_s^t|X^N(u)-X^N(s)|^{p-2}
(X^N(u)-X^N(s),\sigma(X^N(u))dB^N(u))\nonumber\\
& +p\int_s^t|X^N(u)-X^N(s)|^{p-2}
(X^N(u)-X^N(s),b(X^N(u)))du\nonumber \\
& =:J_1+J_2,
\end{align}
where we have used the non-positivity of the third term
which follows from the convexity of $D$.
By noting $|X^N(u)-X^N(s)|^{p-1}\le \frac{1}{2}\left(
|X^N(u)-X^N(s)|^p+|X^N(u)-X^N(s)|^{p-2}\right)$ and by
the assumption of induction,
we have
\begin{align}
E[J_2]&\le
C(t-s)^{p/2}+\int_s^t
E[|X^N(u)-X^N(s)|^p]du.
\end{align}
For $J_1$, we have
\begin{align}
 J_1&=
\sum_{k=l+1}^m
p\int_{t_{k-1}}^{t_k}
|X^N(u)-X^N(s)|^{p-2}\left(X^N(u)-X^N(s),
\sigma(X^N(u))
\frac{\Delta B_k}{\Delta}\right)du.
\end{align}
\begin{align}
\lefteqn{\int_{t_{k-1}}^{t_k}
|X^N(u)-X^N(s)|^{p-2}
(X^N(u)-X^N(s),\sigma(X^N(u))\frac{\Delta B_k}{\Delta})du}\nonumber\\
&=
\int_{t_{k-1}}^{t_k}
|X^N(u)-X^N(s)|^{p-2}
\left(X^N(t_{k-1})-X^N(s), \sigma(X^N(t_{k-1}))\frac{\Delta
 B_k}{\Delta}\right)du
\nonumber\\
&\quad +
\int_{t_{k-1}}^{t_k}
|X^N(u)-X^N(s)|^{p-2}\left(
X^N(u)-X^N(t_{k-1}), \sigma(X^N(t_{k-1}))
\frac{\Delta B_k}{\Delta}\right)du\nonumber\\
&\quad +\int_{t_{k-1}}^{t_k}
|X^N(u)-X^N(s)|^{p-2}
\left(X^N(t_{k-1})-X^N(s), \left(\sigma(X^N(u))-
\sigma(X^N(t_{k-1}))\right)\frac{\Delta B_k}{\Delta}\right)\nonumber\\
&\quad +
\int_{t_{k-1}}^{t_k}
|X^N(u)-X^N(s)|^{p-2}\left(
X^N(u)-X^N(t_{k-1}), \left(\sigma(X^N(u))-\sigma(X^N(t_{k-1}))\right)
\frac{\Delta B_k}{\Delta}\right)du\nonumber\\
&=J^k_{1,1}+J^k_{1,2}+J^k_{1,3}+J^k_{1,4}.
\end{align}
We have
\begin{align}
J^k_{1,1}&=J^k_{1,1,1}+J^k_{1,1,2}+J^k_{1,1,3},
\end{align}
where
\begin{align}
 J^k_{1,1,1}&=
\int_{t_{k-1}}^{t_k}
\left\{\int_{t_{k-1}}^u
(p-2)|X^N(r)-X^N(s)|^{p-4}
\left(X^N(r)-X^N(s),\sigma(X^N(r))\frac{\Delta B_k}{\Delta}\right)dr\right\}
\nonumber\\
&\qquad \times
\left(X^N(t_{k-1})-X^N(s), \sigma(X^N(t_{k-1}))\frac{\Delta
 B_k}{\Delta}\right)du,
\end{align}
\begin{align}
 J^k_{1,1,2}&=
\int_{t_{k-1}}^{t_k}
\left\{\int_{t_{k-1}}^u
(p-2)|X^N(r)-X^N(s)|^{p-4}
\left(X^N(r)-X^N(s),b(X^N(r))\right)dr\right\}
\nonumber\\
&\qquad \times
\left(X^N(t_{k-1})-X^N(s), \sigma(X^N(t_{k-1}))\frac{\Delta
 B_k}{\Delta}\right)du,
\end{align}
\begin{align}
J^k_{1,1,3}&=
\int_{t_{k-1}}^{t_k}
\left\{\int_{t_{k-1}}^u
(p-2)|X^N(r)-X^N(s)|^{p-4}
\left(X^N(r)-X^N(s),d\Phi^N(r)\right)\right\}
\nonumber\\
&\qquad \times
\left(X^N(t_{k-1})-X^N(s), \sigma(X^N(t_{k-1}))\frac{\Delta
 B_k}{\Delta}\right)du
\end{align}
By the estimate for $p=2$ and Lemma~\ref{XN small interval} (2),
we have
\begin{align}
E[J^k_{1,1,1}]&\le
C_pE\left[|X^N(t_{k-1})-X^N(s)|^{p-2}\right]\Delta\nonumber\\
& +
\int_{t_{k-1}}^{t_k}\int_{t_{k-1}}^u
E\left[|X^N(r)-X^N(t_{k-1})|^{p-3}|
|X^N(t_{k-1})-X^N(s)|\left(\frac{|\Delta B_k|}{\Delta}\right)^2\right]
drdu\nonumber\\
&\le
C_pE\left[|X^N(t_{k-1})-X^N(s)|^{p-2}\right]\Delta+
C (t_{k-1}-s)^{1/2}\Delta^{(p-1)/2}.
\end{align}
Noting that for any $a>0$,
$\sum_{k=l+1}^m(t_{k-1}-s)^a\Delta\le \int_s^t(u-s)^adu\le
 (t-s)^{a+1}/(a+1)$
and
using the assumption of induction,
\begin{align}
 E[\sum_{k=l+1}^mJ^k_{1,1,1}]&\le
C\sum_{k=l+1}^m
\left\{(t_{k-1}-s)^{(p-2)/2}\Delta+
(t_{k-1}-s)^{1/2}\Delta^{(p-1)/2}\right\}\le C(t-s)^{p/2}.
\end{align}
Similarly,
\begin{align}
E[J^k_{1,1,2}]&\le
C_pE[|X^N(t_{k-1})-X^N(s)|^{p-2}]\Delta^{3/2}
+C(t_{k-1}-s)^{1/2}\Delta^{p/2}.
\end{align}
\begin{align}
& E[J^k_{1,1,3}]\le
C_pE\Bigl
[|X^N(t_{k-1})-X^N(s)|^{p-2}
E\left[\|\Phi^N\|_{[t_{k-1},t_k]}
|\Delta B_k||{\cal F}_{t_{k-1}}\right]\Bigr]\nonumber\\
&\quad +C_p
E\left[
|X^N(t_{k-1})-X^N(s)|
E\left[\max_{t_{k-1}\le r\le t_k}|X^N(r)-X^N(t_{k-1})|^{p-3}
\|\Phi^N\|_{t_{k-1},t_k}|\Delta B_k|\,|\,{\cal F}_{t_{k-1}}
\right]\right]\nonumber\\
&\qquad\le
C_pE[|X^N(t_{k-1})-X^N(s)|]^{p-2}\Delta+
(t_{k-1}-s)^{1/2}\Delta^{(p-1)/2}.
\end{align}
Thus, we have
$E[\sum_{k=l+1}^mJ^k_{1,1,2}]+
E[\sum_{k=l+1}^mJ^k_{1,1,3}]\le
C(t-s)^{p/2}$.

\noindent
We consider the terms $J^k_{1,i}$~$(2\le i\le 4)$.
\begin{align}
E[J^k_{1,2}]&\le\Delta^{-1}
\int_{t_{k-1}}^{t_k}
E\Bigl[|X^N(t_{k-1})-X^N(s)|^{p-2}
E\left[|X^N(u)-X^N(t_{k-1})|^{p-1}|\Delta B_k| 
| {\cal F}_{t_{k-1}}\right]\Bigr]du\nonumber\\
&\quad +\Delta^{p/2}
\nonumber\\
&\le E[|X^N(t_{k-1})-X^N(s)|^{p-2}]\Delta^{p/2}+\Delta^{p/2}.
\end{align}
\begin{align}
E[J^k_{1,3}]&\le
C\Delta^{-1}
\int_{t_{k-1}}^{t_k}
E\Bigl[|X^N(t_{k-1})-X^N(s)|
E\left[|X^N(u)-X^N(t_{k-1})|^{p-1}|\Delta B_k| | {\cal F}_{t_{k-1}}\right]
\Bigr]du\nonumber\\
& \quad +C\Delta^{-1}
\int_{t_{k-1}}^{t_k}
E\Bigl[
|X^N(t_{k-1})-X^N(s)|^{p-1}
E\left[|X^N(u)-X^N(t_{k-1})||\Delta B_k| | {\cal F}_{t_{k-1}}\right]
\Bigr]du\nonumber\\
&\le  CE[|X^N(t_{k-1})-X^N(s)|]\Delta^{p/2}+
CE[|X^N(t_{k-1})-X^N(s)|^{p-1}]\Delta.
\end{align}
\begin{align}
E[J^k_{1,4}]&\le
C\Delta^{-1}
\int_{t_{k-1}}^{t_k}
E\Bigl[|X^N(t_{k-1})-X^N(s)|^{p-2}
E\left[|X^N(u)-X^N(t_{k-1})|^{2}|\Delta B_k| | {\cal F}_{t_{k-1}}\right]
\Bigr]du\nonumber\\
& \quad +C\Delta^{-1}
\int_{t_{k-1}}^{t_k}
E\Bigl[
|X^N(u)-X^N(t_{k-1})|^{p}|\Delta B_k| 
\Bigr]du\nonumber\\
&\le  CE[|X^N(t_{k-1})-X^N(s)|^{p-2}]\Delta^{3/2}+
C\Delta^{(p+1)/2}.
\end{align}
Hence
\begin{align}
E\left[|X^N(t)-X^N(s)|^p\right]&\le
C(t-s)^{p/2}+\int_s^tE\left[|X^N(u)-X^N(s)|^p\right]du.
\label{XN general Gronwall}
\end{align}
By using (\ref{XN small interval 2}),
we see that (\ref{XN general Gronwall}) holds for any
$0\le s\le t\le T$.
By the Gronwall inequality,
we get the desired inequality for $p$
and we complete the proof of (\ref{XN general interval}).
The estimate (\ref{YN general interval}) and the Garsia-Rodemich-Rumsey
estimate imply
the $L^r$-boundedness of the H\"older norm with exponent $1/2-\ep$
of $Y^N$ for any $r\ge 1$ and $0<\ep<1/2$.
Hence,
(\ref{PhiN general interval}) follows from
Lemma~\ref{local time estimate convex case} and (\ref{XN general interval}).
\end{proof}

Thanks to the above estimates, we can 
prove the first main theorem as in \cite{aida-sasaki}.

\begin{proof}[Proof of Theorem~$\ref{convex case}$]
Let $X^N_E(t)$ be the Euler approximation of $X$.
That is, $X^N_E(0)=x$ and $X^N_E$ is the solution to the
Skorohod equation:
\begin{align}
 X^N_E(t)&=X^N_E(t^N_{k-1})+\sigma(X^N_E(t^N_{k-1}))(B(t)-B(t^N_{k-1}))+
\tilde{b}(X^N_E(t^N_{k-1}))(t-t^N_{k-1})\nonumber\\
& \quad+\Phi^N_E(t)-\Phi^N(t^N_{k-1})\qquad t^N_{k-1}\le t\le t^N_k,
\label{euler approximation}
\end{align}
where $\Phi^N_E(t)-\Phi^N(t^N_{k-1})$
is the local time term and
$\tilde{b}=b+\frac{1}{2}\tr (D\sigma)(\sigma)$.
By a similar argument to (\ref{X general interval}) and (\ref{PhiN
 general interval}),
we obtain
\begin{align}
E[\|X^N_E\|_{\infty,[s,t]}^{2p}]&
\le C_p|t-s|^p,\label{estimate for xe}\\
 E\left[\|\Phi^N_E\|_{[s,t]}^{2p}\right]&\le C_p|t-s|^p.\label{estimate for phie}
\end{align}
Hence by the same proof as in \cite{aida-sasaki}, we obtain
there exists $C_{p}>0$ such that
\begin{align}
 E\left[\max_{0\le t\le T}|X^N_E(t)-X(t)|^{2p}\right]
&\le C_{p}\Delta_N^{p}
\end{align}
By these estimates and Lemma~\ref{moment estimate},
we can prove the desired estimates 
as in the same way in \cite{aida-sasaki}.
The proof is simpler than that in \cite{aida-sasaki}
because $f\equiv 0$ when $D$ is convex.
\end{proof}

\section{General domains satisfying conditions (A) and (B)}

In this section, we prove Theorem~\ref{conditions A B}.
The following observation which can be found in Lemma 5.3 in \cite{saisho}
is crucial for our purpose.

\begin{lem}\label{local property A and B}
Assume {\rm (A)} and {\rm (B)} are satisfied on $D$.
Let $\gamma=2r_0\beta^{-1}$.
Then for for each $z_0\in \partial D$ we can find a function
$f\in C^2_b(\RR^d)$ satisfying $(\ref{condition C})$
for any $x\in B(z_0,\delta)\cap \partial D$, 
$y\in \bar{D}$ and ${\bm n}\in {\cal N}_x$.
Moreover the sup-norms $\|D^kf\|_{\infty}$~$(k=0,1,2)$ are bounded by some
constant independent of $z_0$.
\end{lem}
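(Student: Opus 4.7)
The plan is to combine conditions (A) and (B) directly: (A) provides a universal quadratic lower bound on $(y-x,\bm n)$ which only needs to be corrected by the amount that the \emph{best} uniform bound for $(l_{z_0},\bm n)$ from (B) allows, and (B) supplies exactly such a direction vector that is valid on the whole ball $B(z_0,\delta)\cap\partial D$. The function $f$ is then obtained by multiplying the linear functional $y\mapsto (l_{z_0},y-z_0)$ by a smooth cutoff supported near $z_0$, so that $Df\equiv l_{z_0}$ on $B(z_0,\delta)$ and the $C^2_b$-norms depend only on the (fixed) cutoff and $\delta$.

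More concretely, I would first record the consequence of (A) (already noted in the paper): for each $x\in\partial D$ and $\bm n\in{\cal N}_x={\cal N}_{x,r_0}$,
\begin{equation*}
(y-x,\bm n)+\frac{1}{2r_0}|y-x|^2\ge 0\qquad\text{for all }y\in\bar D.
\end{equation*}
Then I would fix once and for all a cutoff $\psi\in C^\infty_c(\RR^d)$ with $\psi\equiv 1$ on $B(0,\delta)$, $\psi\equiv 0$ outside $B(0,2\delta)$, and $0\le\psi\le 1$. Given $z_0\in\partial D$, take the unit vector $l_{z_0}$ provided by (B) (applied at $x=z_0$) and set
\begin{equation*}
f(y)=\psi(y-z_0)\,(l_{z_0},y-z_0).
\end{equation*}
Since $|l_{z_0}|=1$, the three $C^2$-norms of $f$ are bounded by explicit constants depending only on $\psi$ and $\delta$, hence uniformly in $z_0$.

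For the main inequality, observe that on $B(z_0,\delta)$ one has $Df\equiv l_{z_0}$. Thus if $x\in B(z_0,\delta)\cap\partial D$ and $\bm n\in{\cal N}_x$, then $\bm n$ belongs to the set $\cup_{y\in B(z_0,\delta)\cap\partial D}{\cal N}_y$ for which (B) supplies $(l_{z_0},\bm n)\ge 1/\beta$. Combining with the (A)-inequality and using $\gamma=2r_0\beta^{-1}$,
\begin{equation*}
(y-x,\bm n)+\frac{1}{\gamma}(Df(x),\bm n)|y-x|^2\ge\Bigl(-\frac{1}{2r_0}+\frac{1}{\gamma\beta}\Bigr)|y-x|^2=0,
\end{equation*}
which is exactly (\ref{condition C}) localized at $z_0$.

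There is no real obstacle here; the only minor point is to note that the quantifier structure of (B) is precisely what is needed, namely that the single vector $l_{z_0}$ controls \emph{all} inward normals based in the entire ball $B(z_0,\delta)\cap\partial D$, not merely at $z_0$ itself. This is what lets us use the constant gradient $l_{z_0}$ of the cutoff-linear $f$ uniformly for every admissible $x$, and it is what ultimately justifies choosing $\gamma=2r_0\beta^{-1}$ rather than something depending on $x$.
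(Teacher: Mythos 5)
Your proof is correct and is essentially the argument the paper leaves implicit: the paper gives no proof of its own but cites Lemma 5.3 of Saisho, where the function is built in the same way (a fixed cutoff times the linear functional $y\mapsto (l_{z_0},y-z_0)$, so that $Df\equiv l_{z_0}$ on $B(z_0,\delta)$ and the choice $\gamma=2r_0\beta^{-1}$ makes $\frac{1}{\gamma\beta}=\frac{1}{2r_0}$ exactly cancel the quadratic defect coming from (A)). Your remark that the (A)-inequality, and hence the conclusion, holds for \emph{all} $y\in\bar{D}$ rather than only $y\in B(z_0,\delta)\cap\bar{D}$ is precisely the point the paper makes in the sentence following the lemma.
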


It is stated in Lemma 5.3 in \cite{saisho} that
the conclusion in the above proposition 
holds for $y\in B(z_0,\delta)\cap \bar{D}$.
However, it is obvious to see the same conclusion holds
for any $y\in \bar{D}$.
Thanks to this proposition,
we can localize the problem.
Let us choose a positive number $\delta'<\delta/2$.
For any $z\in \bar{D}$, if $B(z,\delta')\cap \partial D\ne
\emptyset$,
then there exists $z_0\in \partial D$ such that
$\overline{B(z,\delta')}\subset B(z_0,\delta)$.
Next, let $\chi$ be a $C^{\infty}$ function on $\RR^d$ such that
$\chi(x)=1$ for $x$ with $|x|\le \delta'/2$,
$\chi(x)=0$ for $x$ with $|x|\ge 2\delta'/3$.
Let $z\in \bar{D}$ and define
\begin{align}
\sigma^z(x)=\sigma(x)\chi(x-z), \qquad b^z(x)=b(x)\chi(x-z)\qquad x\in \RR^d.
\end{align}
We denote the solution and the
Wong-Zakai approximation to $({\rm SDE})_{\sigma^z,b^z}$ with the starting
point $x$ by $X^{z}(t,x,\omega)$ and $X^{N,z}(t,x,\omega)$
respectively.
By the uniqueness of strong solutions, we have
\begin{itemize}
 \item[(i)] $X^z(t,x,\omega)=X^{N,z}(t,x,\omega)=x$ for all $x\in B(z,\delta'/2)^c$
\item[(ii)] If $x\in B(z,2\delta'/3)$, then both
$X^z(t,x,\omega)$ and $X^{N,z}(t,x,\omega)$ belong to $B(z,2\delta'/3)$
      for all $t$ and $N$.
\end{itemize}

We need a continuous dependence of solutions of reflecting SDE
with respect to the starting point as in the following.
Below, we state it for the particular case ${\rm SDE}_{\sigma^z,b^z}$
but it is easy to extend the result to more general situations.

\begin{lem}\label{localization lemma}
Assume {\rm (A)} and {\rm (B)} hold on $D$.
\begin{enumerate}
\item[{\rm (1)}]For any $p\ge 1$ and $x,y\in \bar{D}$, we have
\begin{align}
 E\left[\max_{0\le t\le T}
|X^z(t,x)-X^z(t,y)|^p\right]&\le C_{p}|x-y|^p.
\label{continuous dependence}
\end{align}
The constant $C_p$ is independent of $z$.
\item[{\rm (2)}] 
Let $0<\theta<1$.
There exists a positive constant $C_{T,\theta}$
such that for any $x,z\in \bar{D}$,
we have
\begin{align}
E\left[\max_{0\le t\le T}
|X^{N,z}(t,x)-X^z(t,x)|^2\right]\le C_{T,\theta}\Delta_N^{\theta/6}.
\label{WZ small ball}
\end{align}
\item[{\rm (3)}] Let $x\in B(z,\delta'/2)$.
Let $\tau(\omega)$ and $\sigma(\omega)$ be the exit time of 
$X(t,x,\omega)$ and $X^z(t,x,\omega)$ respectively
from
$B(z,\delta'/2)$.
Then $\tau(\omega)=\sigma(\omega)$~$P$-a.s. $\omega$ and
$X(t,x,\omega)=X^z(t,x,\omega)$~$(0\le t\le \tau(\omega))$.
\end{enumerate}
\end{lem}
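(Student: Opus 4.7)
The plan is to prove the three parts in increasing order of difficulty: part (3) is a pure pathwise-uniqueness argument, while parts (1) and (2) will be reductions to the ``global'' estimates of \cite{saisho} and \cite{aida-sasaki} respectively, with the localized auxiliary function of Lemma \ref{local property A and B} standing in for the global $f$ of condition (C).

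For (3), I note that inside $B(z,\delta'/2)$ one has $\chi(\cdot-z)\equiv 1$, so $\sigma^z=\sigma$ and $b^z=b$ there; the stopped processes $X(\cdot\wedge\tau,x,\omega)$ and $X^z(\cdot\wedge\sigma,x,\omega)$ therefore each solve the reflecting SDE $({\rm SDE})_{\sigma,b}$ with the same Brownian motion and same initial value up to $\tau\wedge\sigma$. Strong uniqueness under (A), (B) forces their agreement on $[0,\tau\wedge\sigma]$; continuity then gives $\tau=\sigma$.

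For (1), I would adapt Saisho's continuous-dependence estimate. Since $\delta'<\delta/2$, whenever $B(z,\delta')\cap\partial D\neq\emptyset$ one can choose $z_0\in\partial D$ with $\overline{B(z,\delta')}\subset B(z_0,\delta)$; otherwise the support of $\sigma^z,b^z$ does not meet $\partial D$ at all, no reflection occurs, and I take $f_{z_0}\equiv 0$. By property (ii) above, all reflections of $X^z(\cdot,x)$ and $X^z(\cdot,y)$ occur in $B(z_0,\delta)\cap\partial D$, and hence are governed by (C) through $f_{z_0}$. I would then apply It\^o's formula to
\[
\Psi(t)=\exp\!\bigl(-c[f_{z_0}(X^z(t,x))+f_{z_0}(X^z(t,y))]\bigr)\,|X^z(t,x)-X^z(t,y)|^{p},
\]
choosing $c>0$ large enough that, by (C), the two reflection contributions are non-positive. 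The drift and diffusion terms are then controlled by the Lipschitz property of $\sigma^z,b^z$, Burkholder--Davis--Gundy, and Gronwall's inequality. Since $\|f_{z_0}\|_{C^2_b}$ is bounded uniformly in $z_0$ by Lemma \ref{local property A and B}, the resulting constant $C_p$ is independent of $z$.

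For (2), I would invoke the Wong--Zakai estimate of \cite{aida-sasaki} with $f_{z_0}$ in place of the global $f$. The coefficients $\sigma^z,b^z$ lie in $C^2_b$ and are compactly supported in $B(z,2\delta'/3)$, while by (ii) the trajectories of $X^z(\cdot,x)$ and $X^{N,z}(\cdot,x)$ remain in $\overline{B(z,2\delta'/3)}\subset B(z_0,\delta)$. Condition (C) is used in \cite{aida-sasaki} only at boundary points actually visited by the approximating and limiting processes; all such points lie in $B(z_0,\delta)\cap\partial D$, where $f_{z_0}$ supplies (C). The Aida--Sasaki argument then applies verbatim, yielding (\ref{WZ small ball}) with $C_{T,\theta}$ uniform in $z$. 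The main obstacle will be precisely this last reduction: checking carefully that the proof of \cite{aida-sasaki} depends on condition (C) only through values of $f$ along the trajectory, so that the local $f_{z_0}$ legitimately substitutes for the global $f$, and tracking constants to ensure their independence from $z$.
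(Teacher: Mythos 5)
Your parts (1) and (3) follow essentially the paper's route: for (1) the paper forms exactly the weighted quantity $k^z(t)=\rho^z(t)|Z^z(t)|^2$ with $\rho^z(t)=e^{-\frac{2}{\gamma}(f(X^z(t,x))+f(X^z(t,y)))}$, where $f$ is the local function of Lemma~\ref{local property A and B}, kills the local-time contributions by the sign coming from (\ref{condition C}), and closes with Burkholder--Davis--Gundy and Gronwall; for (3) it is the same stopped-process/strong-uniqueness argument (after Lemma~5.5 of \cite{saisho}).

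For (2), however, there is a genuine gap. Your reduction rests on the claim that, by property (ii), every process entering the Aida--Sasaki argument stays in $\overline{B(z,2\delta'/3)}\subset B(z_0,\delta)$, so that the local $f_{z_0}$ substitutes for the global $f$ of condition (C) ``verbatim''. But the proof in \cite{aida-sasaki} is a two-step argument through the Euler approximation $X^{N,z}_E$: first $X^{N,z}_E\to X^z$, then $X^{N,z}-X^{N,z}_E\to 0$. Property (ii) controls $X^z$ and $X^{N,z}$ but not $X^{N,z}_E$: within one partition interval the Euler scheme moves by $\sigma^z(X^{N,z}_E(t_{k-1}))(B(t)-B(t_{k-1}))$, and a large Brownian increment can carry it out of $B(z,\delta')$ before the cutoff is re-evaluated, so its reflection term may act at boundary points outside $B(z_0,\delta)$, where $f_{z_0}$ provides no sign information. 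The paper explicitly warns that one ``cannot conclude that the proof in \cite{aida-sasaki} works in the present case'' for precisely this reason, and repairs it by showing that the probability of such an excursion is at most $\exp\left(-C(\ep\delta')^2(N/T)^{2\theta}\right)$ (via the H\"older norm of $B$), then carrying the resulting $e^{-C(N/T)^{2\theta}}$ error through the Gronwall estimate for $X^{N,z}_E-X$ and through the local-time term involving $\Phi^{N,z}_E$. You flag ``the main obstacle'' as verifying that (C) is used only along trajectories, but the actual obstacle is that one of the relevant trajectories (the Euler one) can leave the localization ball; without the exponential tail estimate your argument does not close. (One could instead bypass the Euler scheme entirely, as the paper's closing remark observes, but that is a different proof from the one you invoke.)
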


\begin{proof}
(1)
 If $x$ or $y$ belongs to $B(z,2\delta'/3)^c$, 
the assertion is true because of (i) and (ii) above.
Therefore we may assume $x,y\in B(z,2\delta'/3)$.
Suppose $B(z,\delta')\cap \partial D$ is not an empty set.
Then we can pick a point $z_0\in B(z,\delta')\cap \partial D$ such that
$B(z,\delta')\subset B(z_0,\delta)$.
Let $f$ be a function in Lemma~\ref{local property A and B}
associated with $z_0$.
Let 
\begin{align}
 Z^z(t)=X^z(t,x)-X^z(t,y),~~
 \rho^z(t)=e^{-\frac{2}{\gamma}(f(X^z(t,x))+f(X^z(t,y))},~~
k^z(t)=\rho^z(t)|Z^z(t)|^2.
\end{align}
In the calculation below, we omit the superscript $z$ in the
notation $X^z$, and so on.
Let $\tilde{b}=b+\frac{1}{2}\tr (D\sigma)(\sigma)$.
By the Ito formula,
\begin{align}
\lefteqn{d k(t)}\nonumber\\
&=
\rho(t)\Biggl\{
2\Bigl(Z(t),\left(\sigma(X(t,x))
-\sigma(X(t,y))\right)dB(t)\Bigr)\nonumber\\
& \quad +
2\left(Z(t),\tilde{b}(X(t,x))-\tilde{b}(X(t,y))\right)dt +
\|\sigma(X(t,x))-\sigma(X(t,y))\|_{H.S.}^2dt
\Biggr\}\nonumber\\
&\quad +
2\rho(t)\left(Z(t),d\Phi(t,x)-d\Phi(t,y)\right)\nonumber\\
&\quad
-\frac{2\rho(t)}{\gamma}
\left|Z(t)\right|^2\Bigl\{
\left((D f)(X(t,x)),d\Phi(t,x)\right)
+
\left((D f)(X(t,y)),d\Phi(t,y)\right)
\Bigr\}\nonumber\\
&\quad-\frac{2\rho(t)}{\gamma}
\left|Z(t)\right|^2\Bigl\{
\left((D f)(X(t)),\sigma(X(t,x))dB(t)\right)
+
\left((D f)(X(t,y)),\sigma(X(t,y))dB(t)\right)
\Bigr\}\nonumber\\
&\quad +R(t)dt, \label{kt}
\end{align}
where
\begin{align}
R(t)
&=\frac{4\rho(t)}{\gamma}
\Bigl((D f)(X(t,x)),\sigma(X(t,x))\,
{}^t\left(\sigma(X(t,x))-\sigma(X(t,y))\right)
\left(Z(t)\right)\Bigr)dt\nonumber\\
& \quad +\frac{4\rho(t)}{\gamma}
\left((D f)(X(t,y)),\sigma(X(t,y))\,
{}^t\left(\sigma(X(t,x))-\sigma(X(t,y))\right)
\left(Z(t)\right)\right)dt\nonumber\\
& \quad -\frac{2\rho(t)}{\gamma}|Z(t)|^2
\left(\left((D f)(X(t,x)),\tilde{b}(X(t,x))\right)dt+
\left((D f)(X(t,y)),\tilde{b}(X(t,y))\right)dt
\right)\nonumber\\
& \quad -\frac{\rho(t)}{\gamma}|Z(t)|^2
\Bigl\{\tr(D^2f)(X(t,x))\left(\sigma(X(t,x))\cdot,
\sigma(X(t,x))\cdot\right)
\nonumber\\
& \quad \quad\quad \quad\qquad\quad +
\tr(D^2f)(X(t,y))\left(\sigma(X(t,y))\cdot,\sigma(X(t,y))\cdot)\right)
\Bigr\}dt
\nonumber\\
& \quad +\frac{2\rho(t)}{\gamma^2}
\|(D f)(X(t,x))(\sigma(X(t,x)))
+(D f)(X(t,y))(\sigma(X(t,y)))
\|^2
|Z(t)|^2dt.\label{Rt}
\end{align}
Let us take a look at the second and third terms
of (\ref{Rt}).
This term is not equal to 0 when 
$X(t,x)$ or $X(t,y)$ hits $\partial D$.
By the property of $f$,
these terms are negative.
Taking this into account and using the Burkholder-Davis-Gundy
inequality,
we estimate $L^{p}$-norm of $\max_{0\le t\le T'}k(t)$~$(0\le T'\le T)$,
where $p\ge 2$.
Similarly to the proof of Theorem~3.1 in
\cite{aida-sasaki} and Lemma~3.1 in \cite{lions-sznitman},
we have
\begin{align}
 E[\max_{0\le t\le T'}k(t)^p]&\le
C_p|x-y|^{2p}+C_p'\int_0^{T'}E\left[\max_{0\le s\le t}k(s)^p\right]dt
\end{align}
which implies the desired result.

We prove (2).
When $x\notin B(z,2\delta'/3)$, 
$X^z(t,x,\omega)=X^{N,z}(t,x,\omega)=x$ for all $t, N$.
So we assume $x\in B(z,2\delta'/3)$.
If $B(z,\delta')\cap \partial D=\emptyset$, 
by the properties (i) and (ii), 
$X^{N,z}(t,x)$ and $X^z(t,x)$ never hits the
boundary of $D$.
Hence the classical Wong-Zakai theorem implies the assertion.
Suppose $B(z,\delta')\cap \partial D\ne \emptyset$.
Then there exists $z_0\in \partial D$ such that
$\overline{B(z,\delta')}\subset B(z_0,\delta)$.
In \cite{aida-sasaki}, (\ref{WZ small ball}) 
is proved under the conditions (A), (B) and
 (C) on $D$.
By Lemma~3.1, the condition (C) holds locally
in some sense.
Also, $X^{N,z}(t,x), X^z(t,x)\in B(z,2\delta'/3)$.
However, we cannot conclude that the proof 
in \cite{aida-sasaki} works in the present case too.
Because, there, first, we proved that the Euler approximation
converges to the true solution in Theorem 3.1 
and, second,
the difference of the Euler approximation and the Wong-Zakai approximation 
converges to $0$ in Lemma 4.6 in \cite{aida-sasaki}.
In the present case, the Euler approximation solution may exit from
 $B(z,2\delta'/3)$ and reach the boundary of $D$ outside $B(z_0,\delta)$
even if $x\in B(z,2\delta'/3)$.
However, such a probability is small and we can prove (\ref{WZ small
 ball}).
Let us show it more precisely.
Let $X_E^{N,z}(t,x)$ be the Euler approximation
of the solution to $({\rm SDE})_{\sigma^z,b^z}$
with the starting point $x$
associated with the partition
$\{kT/N\}_{k=0}^N$ and $\Phi^{N,z}_E(t,x)$ be the associated local time
 term.
See (\ref{euler approximation}) for the definition of
the Euler approximation.
Let $N$ be a sufficiently large number such that
$\|b\|_{\infty}\Delta_N$ is small.
Then by the estimate (\ref{estimate on local time 1}),
we have
\begin{align}
&P\Bigl(\left\{
\mbox{There exists a time $t\in [0,T]$ such that
$X^{N,z}_E(t,x)\in B(z,\delta')^c$}
\right\}\Bigr)\nonumber\\
&\quad \le 
P\left(\max_{1\le k\le N}\|B\|_{\infty,[(k-1)T/N,kT/N]}\ge
 \ep\delta'\right)\nonumber\\
&\quad \le
P\left(
\|B\|_{{\cal H},\theta}>\ep\delta'\left(\frac{N}{T}\right)^{\theta}
\right)
\le
\exp\left(-C(\ep\delta')^2\left(\frac{N}{T}\right)^{2\theta}\right)
\label{small probability}
\end{align}
where $\ep$ is a small positive number
and $\|~\|_{{\cal H},\theta}$ denotes the H\"older norm with exponent
$\theta$ ($\theta<1/2$).
Thus, combining (\ref{small probability}),
and the moment estimates in Lemma 2.8 and Lemma 3.2 in
 \cite{aida-sasaki}
for $X, \Phi, X^{N,z}_E, \Phi^{N,z}_E$,
by a similar calculation to
the proof of Theorem 3.1,
we obtain
\begin{align}
& E\left[\max_{0\le t\le T'}|X^{N,z}_E(t,x)-X(t,x)|^{2p}\right]\\
&\qquad \le
C_T\Delta_N^p+e^{-C(N/T)^{2\theta}}
+
C_T\int_0^{T'}
E\left[\max_{0\le s\le t}|X^{N,z}_E(s,x)-X(s,x)|^{2p}\right]ds
\label{sigma z euler}
\end{align}
which implies 
$E[\max_{0\le t\le T}|X^{N,z}_E(t,x)-X(t,x)|^{2p}]\le
C_T\Delta_N^p$.
Similarly, the key of the proof
of Lemma 4.6 in \cite{aida-sasaki} is the non-positivity 
of the sum of second and third terms involving local times
$\Phi^N$ and $\Phi^N_E$ in (4.49).
For $({\rm SDE})_{\sigma^z,b^z}$ too,
the corresponding term involving $\Phi^{N,z}$ is non-positive.
For the term $\Phi^{N,z}_E$, by the same reasoning as in 
(\ref{sigma z euler}),
we have
\begin{align}
&E\left[\int_{t_{k-1}}^{t_k}
\left\{\rho^{N,z}(t)\left(Z^{N,z}(t),d\Phi^{N,z}_E(t)\right)
-\frac{\rho^{N,z}(t)}{\gamma}|Z^{N,z}(t)|^2\Bigl((Df)(X^{N,z}_E(t)),
d\Phi^{N,z}_E(t)\Bigr)\right\}\right]\nonumber\\
&\quad \le C_Te^{-C(N/T)^{2\theta}},
\end{align}
where
$
\rho^{N,z}(t)=
\exp\left(-\frac{2}{\gamma}\left(f(X^{N,z}_E(t,x))+f(X^{N,z}(t,x))
\right)\right).
$
Consequently, in a similar way to the proof of Lemma 4.6
in \cite{aida-sasaki},
we obtain for any $0<\theta<1$
\begin{align}
\max_{0\le k\le
 N}E\left[|X^{N,z}(t^N_k)-X^{N,z}_E(t^N_k)|^2\right]
\le C_{\theta}\cdot \Delta_N^{\theta/2}\label{quarter}\\
 E\left[\max_{0\le t\le T}|X^{N,z}(t)-X^z(t)|^{2}\right]\le
C_{T,\theta}\Delta_N^{\theta/6}.
\end{align}
The assertion (3) can be proved by the same argument as
in the proof of Lemma 5.5 in \cite{saisho}.
\end{proof}

\begin{proof}[Proof of Theorem~$\ref{conditions A B}$]
Let $x\in \bar{D}$ and $P_x$ denote the probability law of
the process $X(t,x)$~$(0\le t\le T)$
which exists on $C([0,T]\to \bar{D} ; w(0)=x)$.
Let $c(t)$~$(0\le t\le T)$ be a point of the support of
$P_x$ and 
\begin{align}
 U_{r}(c)=\left\{\omega~\Big |~
\max_{0\le t\le T}|X(t,x,\omega)-c(t)|\le r
\right\}.
\end{align}
It is sufficient to prove that for any $\ep>0$ and $c$
\begin{align}
\lim_{N\to\infty}
P\left(
\left\{\max_{0\le t\le T}|X(t,x)-X^N(t,x)|\ge \ep\right\} 
\cap U_{\delta'/4}(c)
\right)=0.
\end{align}
Let us define a subset of increasing numbers
$\{s_0, \ldots, s_K\}\subset \{t^N_k\}_{k=0}^N$
so that $s_0=0$ and
$s_k=\max\{t^N_l\ge s_{k-1}~|~\max_{s_{k-1}\le t\le t^N_l}
|c(t)-c(s_{k-1})|\le \delta'/8\}$.
For any $c$, if $N$ is sufficiently large, then
the set on the RHS in the definition of $s_k$ is not empty and
$s_K=T$.
Note that the set $\{s_k\}$ and $K$ may depend on $N$ but
$\limsup_{N\to\infty}K<\infty$.
We prove by an induction on
$1\le k\le K$ that for any $\ep>0$ 
\begin{align}
\lim_{N\to\infty}
P\left(
\left\{\max_{0\le t\le s_k}|X(t,x)-X^N(t,x)|\ge \ep\right\}
\cap
U_{\delta'/4}(c)
\right)=0.\label{induction on k}
\end{align}
First, we prove the case $k=1$.
Let 
$s_1^{\ast}=\max\{t~|~\max_{0\le s\le t}|c(s)-x|\le \delta'/8\}$.
Clearly, $s_1\le s_1^{\ast}$ and
$s_1\to s_1^{\ast}$ as $N\to\infty$.
We prove
\begin{align}
\lim_{N\to\infty}
P\left(
\left\{\max_{0\le t\le s_1^{\ast}}|X(t,x)-X^N(t,x)|\ge \ep\right\}
\cap
U_{\delta'/4}(c)
\right)=0.\label{induction 1}
\end{align}
By Lemma~\ref{localization lemma} (3),
we have
\begin{align}
&P\left(
\left\{\max_{0\le t\le s_1^{\ast}}|X(t,x)-X^N(t,x)|\ge \ep\right\}
\cap
U_{\delta'/4}(c)
\right)\nonumber\\
&\quad =
P\left(
\left\{\max_{0\le t\le s_1^{\ast}}|X^x(t,x)-X^N(t,x)|\ge \ep\right\}
\cap
U_{\delta'/4}(c)
\right)\nonumber\\
&\quad =
P\Biggl(
\left\{\max_{0\le t\le s_1^{\ast}}|X^x(t,x)-X^N(t,x)|\ge
 \ep\right\}
\cap
\left\{\max_{0\le t\le s_1^{\ast}}|X^x(t,x)-X^{N,x}(t,x)|\ge
 \delta'/8\right\}\nonumber\\
&\qquad\quad\qquad \cap U_{\delta'/4}(c)
\Biggr)\nonumber\\
&\quad\quad +P\Biggl(
\left\{\max_{0\le t\le s_1^{\ast}}|X^x(t,x)-X^N(t,x)|\ge
 \ep\right\}
\cap
\left\{\max_{0\le t\le s_1^{\ast}}|X^x(t,x)-X^{N,x}(t,x)|\le
 \delta'/8\right\}\nonumber\\
&\qquad\quad\qquad \cap U_{\delta'/4}(c)
\Biggr)\nonumber\\
&\le P\Biggl(
\left\{\max_{0\le t\le s_1^{\ast}}|X^x(t,x)-X^{N,x}(t,x)|\ge
 \delta'/8\right\}
\Biggr)\nonumber\\
&\qquad +P\Biggl(
\left\{\max_{0\le t\le s_1^{\ast}}|X^x(t,x)-X^{N,x}(t,x)|\ge
 \ep\right\}
\Biggr).\label{induction 1 estimate}
\end{align}
Here we have used that
for $\omega$ satisfying
$\max_{0\le t\le s_1^{\ast}}|X^{N,x}(t,x,\omega)-x|\le \delta'/2$,
$X^{N}(t,x,\omega)=X^{N,x}(t,x,\omega)$ holds
for $0\le t\le s_1^{\ast}$.
The estimate (\ref{induction 1 estimate}) and 
Lemma~\ref{localization lemma} (2) implies
the case $k=1$.
We prove (\ref{induction on k}) in the case of $k+1$
assuming the case of $k$.
Let
\begin{align}
 V_{\eta,k}=\left\{\omega~\Big |~
\max_{0\le t\le s_k}|X(t,x)-X^N(t,x)|\le \eta
\right\}.
\end{align}
It suffices to prove
\begin{align}
\limsup_{\eta\to 0}\limsup_{N\to\infty}
P\left(
\left\{\max_{s_k\le t\le s_{k+1}}|X(t,x)-X^N(t,x)|\ge \ep\right\}
\cap
U_{\delta'/4}(c)\cap V_{\eta,k}
\right)=0.
\end{align}
Note that for $t\ge s_k$,
\begin{align}
 X(t,x,\omega)=X(t-s_k,X(s_k,x,\omega),\tau_{k}\omega),\quad
X^N(t,x,\omega)=X^N(t-s_k,X^N(s_k,x,\omega),\tau_{k}\omega),
\end{align}
where $(\tau_k\omega)(t)=\omega(t+s_k)$.
This identity follows from the uniqueness of strong solutions and
$B^N(t,\tau_k\omega)=B^N(s_k+t,\omega)$ for all $k$ and $t\ge 0$.
Hence,
\begin{align}
& P\left(
\left\{\max_{s_k\le t\le s_{k+1}}|X(t,x)-X^N(t,x)|\ge \ep\right\}
\cap
U_{\delta'/4}(c)\cap V_{\eta,k}
\right)\nonumber\\
& \quad \le
P\left(\left\{
\max_{0\le s\le s_{k+1}-s_k}
|X(s,X(s_k),\tau_{k}\omega)-
X(s,X^N(s_k),\tau_{k}\omega)|\ge \ep/2
\right\}\cap U_{\delta'/4}(c)\cap V_{\eta,k}\right)\nonumber\\
&\quad +
P\left(\left\{\max_{0\le s\le s_{k+1}-s_k}
|X(s,X^N(s_k),\tau_k\omega)
-X^N(s,X^N(s_k),\tau_{k}\omega)|
\ge \ep/2\right\}\cap U_{\delta'/4}(c)\cap V_{\eta,k}
\right)\nonumber\\
&\quad :=I_1+I_2,
\end{align}
where we have written 
$X(s_k)=X(s_k,x,\omega)$ and $X^N(s_k)=X^N(s_k,x,\omega)$
for simplicity.
By Lemma~\ref{localization lemma} (1) and the Chebyshev inequality,
we have
$
I_1\le
4\ep^{-2}C_2\eta^2.
$
Let
\begin{align}
& W_{k,\delta',\eta}=
\left\{\max_{0\le s\le s_{k+1}-s_k}
|X(s,X(s_k),\tau_{k}\omega)-
X(s,X^N(s_k),\tau_{k}\omega)|\le \delta'/16
\right\}\cap U_{\delta'/4}(c)\cap V_{\eta,k},\nonumber\\
& I_3=P\left(\left\{\max_{0\le s\le s_{k+1}-s_k}
|X(s,X^N(s_k),\tau_k\omega)
-X^N(s,X^N(s_k),\tau_{k}\omega)|
\ge \ep/2\right\}\cap 
W_{k,\delta',\eta}\right).\nonumber
\end{align}
To prove $\limsup_{\eta\to 0}\limsup_{N\to\infty}I_2=0$,
it suffices to show
$\limsup_{N\to\infty}I_3=0$ for any $\eta$.
We explain the reason.
By Lemma~\ref{localization lemma} (3),
\begin{align}
&P\Biggl(\left\{\max_{0\le s\le s_{k+1}-s_k}
|X(s,X(s_k),\tau_{k}\omega)-
X(s,X^N(s_k),\tau_{k}\omega)|\ge\delta'/16\right\}
\cap U_{\delta'/4}(c)\nonumber\\
&\quad\quad\quad
\cap \left\{|X(s_k)-X^N(s_k)|\le \delta'/8\right\}
\Biggr) \nonumber\\
&\quad
=P\Biggl(\left\{\max_{0\le s\le s_{k+1}-s_k}
|X^{c(s_k)}(s,X(s_k),\tau_{k}\omega)-
X^{c(s_k)}(s,X^N(s_k),\tau_{k}\omega)|\ge\delta'/16\right\}
\cap U_{\delta'/4}(c)\nonumber\\
&\quad\quad\quad
\cap \left\{|X(s_k)-X^N(s_k)|\le \delta'/8\right\}
\Biggr).
\end{align}
By Lemma~\ref{localization lemma} (1), this probability goes to
$0$ as $N\to\infty$
by the assumption of the induction.
Now we estimate $I_3$.
For $\omega\in W_{k,\delta',\eta}$, we have
\begin{align}
 |X(s,X^N(s_k,x,\omega),\tau_k\omega)-c(s+s_k)|\le
\frac{5\delta'}{16} \quad 0\le s\le s_{k+1}-s_k
\end{align}
and so
\begin{align}
 |X(s,X^N(s_k,x,\omega),\tau_k\omega)-c(s_k)|\le
\frac{7\delta'}{16} \quad\quad 0\le s\le s_{k+1}-s_k.\label{induction k}
\end{align}
Here we consider $({\rm SDE})_{\sigma^{c(s_k)},b^{c(s_k)}}$,
where the driving path is $\tau_k\omega$.
Then, for any $\omega\in W_{k,\delta',\eta}$, 
by (\ref{induction k}) and Lemma~\ref{localization lemma} (3),
\begin{align}
X(s,X^N(s_k,x,\omega),\tau_k\omega)=
X^{c(s_k)}(s,X^N(s_k,x,\omega),\tau_k\omega)\quad \quad 0\le s\le s_{k+1}-s_k.
\end{align}
Hence, by a similar argument to the case $k=1$,
we can prove $\limsup_{N\to\infty}I_3=0$
which completes the proof.
\end{proof}

\begin{rem}
 {\rm
As explained in the above proof,
we estimated the difference $X^N-X^N_E$
in Lemma 4.6 in \cite{aida-sasaki}.
However, it is easy to check that
we can estimate the difference 
$X^N-X$ in a similar way to the proof of
$X^N-X^N_E$ and obtain
$\max_{0\le t\le T}E[|X^N(t)-X(t)|^2]\le
 C_{T,\theta}\Delta_N^{\theta/2}$ in the setting
in \cite{aida-sasaki}.
In the proofs of Theorem~\ref{convex case} and Theorem~\ref{conditions A
 B} too,
we can directly estimate the difference
$X^N-X$
in the convex case and 
$X^{N,z}-X^z$ similarly.
By noting this, actually, we do not need to
use the Euler approximation in the above proofs too.
Also, we note that Zhang~\cite{zhang} proved that
the difference $X^N-X$ converges to $0$
without using the Euler approximation
under stronger assumptions than those in \cite{aida-sasaki}.
}
\end{rem}

\end{document}